\newcommand{\bq}{\begin{equation}}
\newcommand{\eq}{\end{equation}}
\newcommand{\bqs}{\begin{equation*}}
\newcommand{\eqs}{\end{equation*}}
\newcommand{\bqa}{\begin{eqnarray}}
\newcommand{\eqa}{\end{eqnarray}}
\newcommand{\bqas}{\begin{eqnarray*}}
\newcommand{\eqas}{\end{eqnarray*}}
\newcommand{\bc}{\begin{cases}}
\newcommand{\ec}{\end{cases}}
\newcommand{\bt}{\begin{thm}}
\newcommand{\et}{\end{thm}}
\newtheorem{definition}{Definition}
\newtheorem{theorem}{Theorem}
\newtheorem{corollary}[theorem]{Corollary}
\newtheorem{lemma}[theorem]{Lemma}
\newtheorem{example}[theorem]{Example}
\theoremstyle{definition}
\numberwithin{equation}{section} \numberwithin{theorem}{section}
\title{A new notion of majorization with applications to the comparison of extreme order statistics}
\author{\small Esmaeil Bashkar$^1$, Hamzeh Torabi$^1$, 
{Ali Dolati}$^1$, {F$ \acute{e} $lix Belzunce$^2$}\\
\small{$^1$Department of Statistics, Yazd University, Yazd, Iran}\\
\small {$^2$Department of Statistics and Operations Research,  University of Murcia, Murcia, Spain}}
\begin{document}
\date{}
\maketitle




\begin{abstract}
In this paper, we use a new partial order, called the $f$-majorization order. The new order includes as special cases the  majorization, the reciprocal majorization and the $ p $-larger orders. We provide a comprehensive account of the mathematical properties of the $f$-majorization order and give applications of this order in the context of stochastic comparison for extreme order statistics of independent samples following the Fr$\grave{\rm e}$chet distribution and scale model.  We discuss stochastic comparisons of series
systems with independent heterogeneous exponentiated scale components in terms of the usual stochastic order and the hazard rate order. We also derive new result on the usual stochastic order for the largest order statistics of samples having exponentiated scale marginals and Archimedean copula structure.
\end{abstract}

\noindent {\bf Keywords:}
Stochastic order; Exponentiated scale model; Fr$\grave{\rm e}$chet distribution; Majorization; $f$-majorization order; Archimedean copula.\\

\noindent {\bf MSC 2010:}  60E15, 60K10.



\section{Introduction}
 In the modern life, applications of order statistics can be found in numerous fields, for example in statistical inference,  life testing and reliability theory.  The first important work devoted to the stochastic comparisons of order statistics arising from heterogeneous exponential random variables is the one by Pledger and Proschan \cite{p}. Some other papers in this direction, and in particular devoted to the comparison of extreme order statistics from heterogeneous exponential distributions are \cite{dyketal97}, \cite{ps}, \cite{kk}. There are many other papers on the comparison of extreme order statistics for some other models of parametric distributions. For example \cite{k}, \cite{to15}, \cite{ll}  and \cite{tk15} deal with the case of heterogeneous Weibull distributions, \cite{fz15} and \cite{kc16} deal with with the case of heterogeneous exponentiated Weibull distributions, \cite{bee} deals with the case of heterogeneous  generalized exponential distributions and \cite{gup} deals with the  case of heterogeneous Fr$\grave{\rm e}$chet distributions. A recent review on the topic can be also found in \cite{bz}. 
  
In these applications, various notions of majorization are used very often.  The majorization orders which are used for finding some nice and applicable inequalities is also useful in understanding the insight of the theory. This concept deals with the diversity of the components of a vector in $ \mathbb R^{n} $. Another interesting weaker order related to the majorization orders introduced in \cite{bp} is the $ p $-larger order. In \cite{zb} the reciprocal majorization is introduced. Note that, for basic notation and terminologies on majorization where we use in this paper, we shall follow \cite{met}. Fang and Zhang \cite{ff} and Fang \cite{fff} used a notion of majorization to prove Slepian's inequality.  In this paper, we used this notion, ( which called $f$-majorization order ) and give applications of this order in the context of stochastic comparison of parallel/series systems with independent and dependent components. This notion includes as particular cases some of the previous ones.

The paper is organized as follows. In Section 2 we provide several notions of stochastic orders and majorization orders, and some known results. We also provide new notions of majorization, the relationships with the previous notions and some new lemmas that will be used later. In Section 3 we provide new results for the comparison of extreme order statistics from heterogeneous  Fr$ \grave{\rm e} $chet, scale  and the exponentiated scale
populations. We also derive new result on the usual stochastic order for the largest order statistics of the random samples having exponentiated scale marginals and Archimedean copula structure. To finish some conclusions are provided in Section 4.

Throughout this paper,  we use the notations $ \mathbb R = (-\infty,+\infty) $, $ \mathbb R_{+} = [0,+\infty) $ and $ \mathbb R_{++} = (0,+\infty) $ and
 the term increasing means nondecreasing and decreasing
means nonincreasing.  Also the notation $ X_{1:n} $ ($ X_{n:n} $) is used to denote the smallest (largest) order statistic of $ n $ random variables $ X_{1}, \ldots , X_{n} $. For any differentiable function $ f(\cdot) $, we
write $ f^{\prime}(\cdot) $ to denote the first derivative. The random variables
considered in this paper are all nonnegative.

\section{Preliminaries on majorization and new definitions}
In this section, we recall some notions of stochastic orders, majorization and related orders and some useful lemmas, which are helpful for proving our main results.

Let $  X $ and $ Y $ be univariate random variables with distribution functions $ F $ and $ G $, density functions $ f $ and $ g $, survival functions $ \overline{F}=1 - F  $ and $ \overline{G}=1 - G  $, hazard rate functions $ r_{F}=f / \overline{F} $ and $ r_{G}=g / \overline{G} $, and reversed hazard rate functions $ \tilde{r}_{F}= f / F $ and $ \tilde{r}_{g}=g /G$, respectively. Based on these functions several notions of stocahstic orders have been defined, to compare the magnitudes of two random variables. Next we recall some defintions of stocahstic orders that will be used along the paper. For a comprehensive discussion on various stochastic orders, please refer to \cite{ms}, \cite{ss}, \cite{lll} and more recently \cite{bmrm}.

\begin{definition}
{\rm Let $  X $ and $ Y $ be two random variables with common support $ \mathbb R_{++} $. $ X $ is said to be smaller than $ Y $ in the 
\begin{itemize}

\item[{\rm(i) }] dispersive order, denoted by $ X\leq_{\rm disp}Y $, if $  F^{-1}(\beta)-F^{-1}(\alpha)\leq G^{-1}(\beta)-G^{-1}(\alpha)$ for all $ 0<\alpha\leq\beta<1 $,
\item[{\rm(ii)}] hazard rate order, denoted by $ X\leq_{hr}Y $, if $ r_{F} (x) \geq r_{G}(x) $ for all $ x $,
\item[{\rm(iii)}] reverse hazard rate order, denoted by $ X\leq_{rh}Y $, if $ \tilde{r}_{F} (x) \leq \tilde{r}_{G}(x) $ for all $ x $,
\item[\rm (iv)] usual stochastic order, denoted by $ X \leq_{\rm st} Y $, if $ \overline{F}(x) \leq \overline{G}(x) $ for all $ x $.
\end{itemize}}
\end{definition}

A real function $ \phi $ is $n$-monotone on $ (a, b) \subseteq (-\infty,+\infty) $ if $ (-1)^{n-2}\phi^{(n-2)} $ is
decreasing and convex in $ (a, b) $ and $ (-1)^{k}\phi^{(k)}(x)\geq0 $ for all $ x \in (a, b), k =
0, 1, \ldots , n - 2 $, in which $\phi^{(i)}(.)$ is the $i$th derivative of $\phi(.)$.
For a $n$-monotone $ (n \geq 2) $ function $ \phi : [0,+\infty) \longrightarrow [0, 1] $ with $ \phi(0) = 1 $ and $ \lim_{x\rightarrow+\infty} \phi(x) = 0 $, let $ \psi = \phi, ^{-1} $ be the
pseudo-inverse, then
\begin{equation*}
C_{\phi}(u_1, \ldots , u_n) = \phi (\psi(u_1) + \ldots + \psi(u_n)), \quad \text{for all}\, u_{i} \in [0, 1], i = 1, \ldots , n,
\end{equation*}
is called an Archimedean copula with the generator $ \phi $. Archimedean copulas
cover a wide range of dependence structures including the independence copula with the
generator $ \phi(t)=e^{-t} $. For more on Archimedean copulas, readers may refer to \cite{nels}
and \cite{ncn}.

Next we provide formal definitions of the different majorization notions that can be found in the literature. Note that, for basic notation and terminologies on majorization used  in this paper, we shall follow \cite{met}. To provide these notions, let us recall that the notation $ x_{(1)}\leq x_{(2)}\leq ...\leq x_{(n)}$ ($ x_{[1]}\geq x_{[2]}\geq ...\geq x_{[n]} $) is used to denote the increasing (decreasing) arrangement of the components of the vector $ \boldsymbol{x} = (x_{1}, \ldots , x_{n})$.

\begin{definition}\label{a1}
{\rm The vector $ \boldsymbol{x} $ is said to be
\begin{itemize}
\item[ (i)] weakly submajorized by the vector $ \boldsymbol{y} $  (denoted by $ \boldsymbol{x}\preceq_{\rm w}\boldsymbol{y} $) if
$\sum_{i=j}^{n}x_{(i)}\leq \sum_{i=j}^{n}y_{(i)}$  for all  $j = 1, \ldots , n $,

\item[ (ii)] weakly supermajorized by the vector $ \boldsymbol{y} $ (denoted by $ \boldsymbol{x}\stackrel{\rm w}{\preceq}\boldsymbol{y} $) if $ \sum_{i=1}^{j}x_{(i)}\geq \sum_{i=1}^{j}y_{(i)} $ for  all $ j = 1, \ldots , n $,

\item[ (iii)]  majorized by the vector $ \boldsymbol{y} $ (denoted by $ \boldsymbol{x}\stackrel{\rm m}{\preceq}\boldsymbol{y} $) if $ \sum_{i=1}^{n}x_{i}= \sum_{i=1}^{n}y_{i}$ and  $\sum_{i=1}^{j}x_{(i)}\geq \sum_{i=1}^{j}y_{(i)}$ for all  $j = 1, \ldots , n-1 $.
\item[(iv)] A vector $  \boldsymbol{x}$ in $ \mathbb R^{n}_{+} $ is said to be weakly log-majorized
by another vector $ \boldsymbol{y} $ in $ \mathbb R^{n}_{+} $ (denoted by $\boldsymbol{x}\underset{\log}{\preceq _{\rm w}}\boldsymbol{y}$
) if
\begin{equation}\label{19}
\prod_{i=1}^{j}x_{[i]}\leq \prod_{i=1}^{j}y_{[i]},\quad j = 1, \ldots , n;
\end{equation}
$  \boldsymbol{x}$  is said to be log-majorized
by  $ \boldsymbol{y} $ (denoted by $\boldsymbol{x}\underset{\log}{\preceq}\boldsymbol{y}$
) if \eqref{19} holds with
equality for $ k = n $.

\item[(v)] A vector $  \boldsymbol{x}$ in $ \mathbb R^{n}_{+} $ is said to be $ p $-larger than another vector $ \boldsymbol{y} $ in $ \mathbb R^{n}_{+} $ (denoted by $ \boldsymbol{x}\stackrel{\rm p}{\succeq}\boldsymbol{y} $) if
\[
\prod_{i=1}^{j}x_{(i)}\leq \prod_{i=1}^{j}y_{(i)},\quad \mbox{ for } j = 1, \ldots , n.
\]

\item[(vi)] A vector $  \boldsymbol{x}$ in $ \mathbb R^{n}_{+} $ is said to reciprocal majorized by another vector $ \boldsymbol{y} $ in $ \mathbb R^{n}_{+} $ (denoted by $ \boldsymbol{x}\stackrel{\rm rm}{\succeq}\boldsymbol{y} $) if
\[
\sum_{i=1}^{j}\frac{1}{x_{(i)}}\geq \sum_{i=1}^{j}\frac{1}{y_{(i)}},\quad j = 1, \ldots , n.
\]
\end{itemize}}
\end{definition}
The ordering introduced in definition \ref{a1} (iv), called log-majorization, was  defined by Weyl \cite{weyl} and studied
by Ando and Hiai \cite{ando}, who delved deeply into applications in matrix
theory. Note that weak log-majorization implies weak submajorization. See
5.A.2.b. of \cite{met}.
Bon and P\v{a}lt\v{a}nea \cite{bp} and Zhao and Balakrishnan \cite{zb}  introduced the order of $ p $-larger and reciprocal majorization, Respectively.
Here it should be noted that, for two vectors  $\boldsymbol{x}$ and $\boldsymbol{y}$, we have $$\boldsymbol{x}\stackrel{\rm p}{\preceq}\boldsymbol{y}\Longleftrightarrow(\log (x_{1}), \ldots , \log (x_{n}))\stackrel{\rm w}{\preceq}(\log (y_{1}), \ldots , \log (y_{n})).$$
It is well-known that (cf. \cite{kkkk} and \cite{kmm})
\[
\boldsymbol{x}\stackrel{\rm rm}{\preceq}\boldsymbol{y}\Longleftarrow\boldsymbol{x}\stackrel{\rm p}{\preceq}\boldsymbol{y}\Longleftarrow\boldsymbol{x}\stackrel{\rm w}{\preceq}\boldsymbol{y}\Longleftarrow\boldsymbol{x}\stackrel{\rm m}{\preceq}\boldsymbol{y}\Longrightarrow \boldsymbol{x}\preceq_{\rm w}\boldsymbol{y},\quad \mbox{ for } \boldsymbol{x},\boldsymbol{y}\in  \mathbb R^{n}_{+}.
\]
The following lemma is needed for proving the main result.

\begin{lemma}[Balakrishnan et al. \cite{bee}]\label{beel}
{\rm Let the function $ h : (0, \infty)\times (0,1)\longrightarrow (0, \infty) $ be defined as
\[
h(\alpha, t)=\frac{\alpha(1-t)t^{\alpha-1}}{1-t^{\alpha}}.
\]
Then,
\begin{itemize}
\item[(i)]  for each $ 0<\alpha\leq1 $, $ h(\alpha, t) $ is decreasing with respect to $ t $; and
\item[(ii)]  for each $ \alpha\geq1 $, $ h(\alpha, t) $ is increasing with respect to $ t $.
\end{itemize}
}
\end{lemma}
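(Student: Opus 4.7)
The plan is to reduce the monotonicity question to a sign question via logarithmic differentiation. Since $h(\alpha,t)>0$ on its domain, the sign of $\partial h/\partial t$ agrees with the sign of $\partial(\log h)/\partial t$. Writing
$$\log h(\alpha,t) = \log\alpha + \log(1-t) + (\alpha-1)\log t - \log(1-t^\alpha),$$
I would differentiate termwise to get
$$\frac{1}{h}\frac{\partial h}{\partial t} = -\frac{1}{1-t} + \frac{\alpha-1}{t} + \frac{\alpha t^{\alpha-1}}{1-t^\alpha}.$$

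Next I would clear denominators by multiplying by the strictly positive quantity $t(1-t)(1-t^\alpha)$, which preserves signs. After expanding and combining like terms, the $t^{\alpha+1}$ contributions cancel and one is left with the simple expression
$$\phi_\alpha(t) := t^\alpha - \alpha t + (\alpha-1).$$
Thus the lemma reduces to proving $\phi_\alpha(t)\le 0$ on $(0,1)$ when $0<\alpha\le 1$, and $\phi_\alpha(t)\ge 0$ on $(0,1)$ when $\alpha\ge 1$ (the case $\alpha=1$ gives $\phi_1\equiv 0$, matching the fact that $h(1,t)\equiv 1$).

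To finish, I would study $\phi_\alpha$ via elementary calculus. One checks $\phi_\alpha(1)=0$ and computes
$$\phi_\alpha'(t) = \alpha\bigl(t^{\alpha-1}-1\bigr).$$
For $\alpha>1$, the exponent $\alpha-1$ is positive, so $t^{\alpha-1}<1$ on $(0,1)$, making $\phi_\alpha$ strictly decreasing there; combined with $\phi_\alpha(1)=0$ this yields $\phi_\alpha(t)>0$ on $(0,1)$, proving part~(ii). Symmetrically, for $0<\alpha<1$ the exponent is negative, so $t^{\alpha-1}>1$ and $\phi_\alpha$ is strictly increasing on $(0,1)$ with $\phi_\alpha(1)=0$, hence $\phi_\alpha(t)<0$ there, giving part~(i).

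The argument is essentially routine once the reduction is performed; there is no real obstacle. The only step demanding a bit of care is the algebraic collapse after clearing denominators — verifying that the $t^{\alpha+1}$ terms indeed cancel so that the resulting expression is the clean two-parameter family $\phi_\alpha$. Once this cancellation is in hand, the sign analysis is immediate from a single derivative.
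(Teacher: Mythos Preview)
Your argument is correct. The logarithmic differentiation, the algebraic collapse to $\phi_\alpha(t)=t^\alpha-\alpha t+(\alpha-1)$ after clearing the positive factor $t(1-t)(1-t^\alpha)$, and the one-derivative sign analysis all check out; the $t^{\alpha+1}$ cancellation is genuine.

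There is nothing in the paper to compare against, however: the lemma is quoted from Balakrishnan et al.\ (2015) and the present paper gives no proof of it at all, treating it as an external auxiliary result. Your write-up therefore supplies a clean, self-contained justification where the paper simply cites one. If anything, this is an improvement in expository completeness rather than a different route to the same destination.
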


We now introduce the main tool for this work. The idea is to provide a new majorization notion that includes as particular cases some of the previous ones. Also it will be used to provide some new results for the comparison of extreme values for the Fr$\grave{\rm e}$chet distribution, scale, and the exponentiated scale model.

\begin{definition}\label{myd}
{\rm Let $ f: \mathbb A \longrightarrow \mathbb R $ be a real valued function. The vector $ \boldsymbol{x} $ is said to be
\begin{itemize}
\item[\rm (i)] weakly $f$-submajorized by the vector $ \boldsymbol{y} $, denoted by $ \boldsymbol{x}\preceq _{\rm wf}\boldsymbol{y} $,   if
$f(\boldsymbol{x}) \preceq _{\rm w} f(\boldsymbol{y})$

\item[\rm (ii)] weakly $f$-supermajorized by the vector $ \boldsymbol{y} $,   denoted by 
$\boldsymbol{x} \stackrel{\rm wf}{\preceq}\boldsymbol{y} $,   if $f(\boldsymbol{x}) \stackrel{\rm w}{\preceq}f(\boldsymbol{y})$

\item[\rm (iii)]  $f$-majorized by the vector $ \boldsymbol{y} $, denoted by $ \boldsymbol{x}\stackrel{\rm fm}{\preceq}\boldsymbol{y} $,   if 
$f(\boldsymbol{x})\stackrel{\rm m}{\preceq}f(\boldsymbol{y})$,
\end{itemize}
where $ f(\boldsymbol{x}) = (f(x_{1}), \ldots , f(x_{n})) $.
}
\end{definition}

It is easy to see that most of the previous majorization notions are examples of the previous notions for some particular choices of the function $f$. In particular we have: 
\begin{align*}
\boldsymbol{x}\stackrel{\rm m}{\preceq}\boldsymbol{y} & \Longleftrightarrow\boldsymbol{x}\stackrel{\rm fm}{\preceq}\boldsymbol{y} \quad when\quad f(t)=t,\\
\boldsymbol{x}\stackrel{\rm p}{\preceq}\boldsymbol{y} & \Longleftrightarrow\boldsymbol{x}\stackrel{\rm wf}{\preceq}\boldsymbol{y} \quad when\quad f(t)=\log(t),
\\
\boldsymbol{x}\stackrel{\rm rm}{\preceq}\boldsymbol{y} & \Longleftrightarrow\boldsymbol{x}\preceq _{\rm wf}\boldsymbol{y} \quad when\quad f(t)=\dfrac{1}{t},\\
\boldsymbol{x}\underset{\log}{\preceq _{\rm w}}\boldsymbol{y} & \Longleftrightarrow\boldsymbol{x}\preceq _{\rm wf}\boldsymbol{y} \quad when\quad f(t)=\log(t).
\end{align*}

The following lemma show the relation between $f$-majorization notion and usual majorization for various functions.

\begin{lemma}\label{m2}
{\rm \begin{itemize}

\item[(i)] If an increasing function $ f $ is convex, then $ \boldsymbol{x}\mathop  \preceq \limits^{{\mathop{\rm wf}} }\boldsymbol{y} $ implies $  \boldsymbol{x} \mathop  \preceq \limits^{{\mathop{\rm w}} } \boldsymbol{y} $,
\item[(ii)] If an increasing function $ f $ concave, then $ \boldsymbol{x}\preceq_{wf}\boldsymbol{y} $ implies $  \boldsymbol{x} \preceq_{w}\boldsymbol{y} $.
\end{itemize}}
\end{lemma}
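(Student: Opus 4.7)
The plan is to reduce both parts to the classical preservation theorems for weak sub- and super-majorization under scalar monotone transformations recorded in Marshall and Olkin~\cite{met}. The two facts I shall invoke are:
\emph{(a)} if $\phi$ is increasing and convex, then $\boldsymbol{u}\preceq_{\rm w}\boldsymbol{v}$ implies $\phi(\boldsymbol{u})\preceq_{\rm w}\phi(\boldsymbol{v})$, and
\emph{(b)} if $\phi$ is increasing and concave, then $\boldsymbol{u}\stackrel{\rm w}{\preceq}\boldsymbol{v}$ implies $\phi(\boldsymbol{u})\stackrel{\rm w}{\preceq}\phi(\boldsymbol{v})$.
The hypothesis of each part is already a weak sub/super-majorization statement for the transformed vectors $f(\boldsymbol{x})$ and $f(\boldsymbol{y})$, so the strategy is to apply (a) or (b) with $\phi=f^{-1}$ and cancel $f^{-1}\circ f=\mathrm{id}$.

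For part~(ii), $f$ is increasing concave, so $f^{-1}$ is increasing convex. Applying (a) with $\phi=f^{-1}$ to the hypothesis $f(\boldsymbol{x})\preceq_{\rm w}f(\boldsymbol{y})$ gives
$$\boldsymbol{x}=f^{-1}(f(\boldsymbol{x}))\preceq_{\rm w}f^{-1}(f(\boldsymbol{y}))=\boldsymbol{y}.$$
For part~(i), $f$ is increasing convex, so $f^{-1}$ is increasing concave; applying (b) analogously to $f(\boldsymbol{x})\stackrel{\rm w}{\preceq}f(\boldsymbol{y})$ yields $\boldsymbol{x}\stackrel{\rm w}{\preceq}\boldsymbol{y}$.

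The only subtlety is the well-definedness of $f^{-1}$, since ``increasing'' here means nondecreasing and $f$ need not be strictly monotone. This can be handled either by passing to a strictly increasing modification of $f$ that agrees with it on the finite set of values $\{x_i,y_j\}$ appearing in the inequalities (the orderings involved depend only on these values), or by bypassing the inverse entirely via the test-function characterizations (for instance, $\boldsymbol{u}\preceq_{\rm w}\boldsymbol{v}$ iff $\sum\psi(u_i)\le\sum\psi(v_i)$ for every increasing convex $\psi$): an arbitrary increasing convex $\psi$ factors as $\psi=(\psi\circ f^{-1})\circ f$, and the standard composition rules ensure that $\psi\circ f^{-1}$ is again increasing convex (in part~(ii)), with the analogous increasing-concave composition handling part~(i). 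The main difficulty is thus not technical but conceptual: keeping straight which combination of convex/concave and sub/super-majorization gives preservation in the correct direction.
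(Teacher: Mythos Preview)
Your proposal is correct and is essentially the same approach as the paper's: the paper's proof is the single sentence ``follows easily from Theorem~5.A.2 of \cite{met},'' and your facts (a) and (b) are precisely the two parts of that theorem, applied with $\phi=f^{-1}$ exactly as one would when unpacking that citation. Your additional remark about the invertibility of a merely nondecreasing $f$ is a legitimate technical point that the paper does not address.
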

\begin{proof}
The proof of this lemma follows easily from Theorem 5.A.2 of \cite{met}.
\end{proof}
According to Lemma \ref{m2}, all of the results which obtain for weak majorization are also true for $ f $-majorization.

An interesting special case of Definition \ref{myd} by taking the exponential function can be achieved. More precisely, we have the following definition.

\begin{definition}
{\rm A vector $  \boldsymbol{x}$ in $ \mathbb R^{n}_{+} $ is said to be weakly $ \exp $-majorized
by another vector $ \boldsymbol{y} $ in $ \mathbb R^{n}_{+} $ (denoted by $\boldsymbol{x}\underset{\exp}{\preceq _{\rm w}}\boldsymbol{y}$
) if
\begin{equation}\label{expe}
\sum_{i=j}^{n}e^{x_{(i)}}\leq \sum_{i=j}^{n}e^{y_{(i)}},   \quad  \mbox{ for } j = 1,   \ldots ,   n;
\end{equation}
$  \boldsymbol{x}$  is said to be $ \exp $-majorized
by  $ \boldsymbol{y} $ (denoted by $\boldsymbol{x}\underset{\exp}{\preceq}\boldsymbol{y}$
) if \eqref{expe} holds with
equality for $ k = n $.
}
\end{definition}

Note that weak sub-majorization implies weak $ \exp $-majorization. See 5.A.2.g. of \cite{met}. In the following example we see that weak $ \exp $-majorization does not imply weak sub-majorization.

\begin{example}{\rm
Let $ (x_{1}, x_{2}) = (0.5, 0.9) $ and  $ (y_{1}, y_{2}) = (1.08, 0.3) $.
Obviously $ (x_{1}, x_{2})\not \succeq_{\rm w}(y_{1}, y_{2}) $ and $  (y_{1}, y_{2})\not \succeq_{\rm w}(x_{1}, x_{2}) $, even though we have $ (x_{1},x_{2})\underset{\exp}{\preceq _{\rm w}}(y_{1},y_{2})  $.}
\end{example}

Next we provide an example, which shows that $\boldsymbol{x}\stackrel{\rm fm}{\preceq}\boldsymbol{y} $ does not imply $ \boldsymbol{x}\stackrel{\rm m}{\preceq}\boldsymbol{y} $.
\begin{example}
{\rm Let $ \boldsymbol{x}=(\sqrt{2},   5) $ and $ \boldsymbol{y}=(2,  \sqrt{23}) $,   then  $ \boldsymbol{y}\stackrel{\rm fm}{\preceq}\boldsymbol{x} $ with $ f(t)= t^{2}$,   but it is clear that $ \boldsymbol{y}$ is not  majorized by $\boldsymbol{x} $.}
\end{example}

The following example shows that $\boldsymbol{x}\stackrel{\rm wf}{\preceq}\boldsymbol{y} $ does not imply $ \boldsymbol{x}\stackrel{\rm w}{\preceq}\boldsymbol{y} $, necessarily.
\begin{example}
{\rm Let $ \boldsymbol{x}=(2,  3) $ and $ \boldsymbol{y}=(1,   5) $,   and  $f$ be  any increasing function that assigns $ -5,  1.5,  -4,  1 $  to $ 1,  5,  2,  3 $,    respectively. We   observe that $ \boldsymbol{x}\stackrel{\rm wf}{\preceq}\boldsymbol{y} $,   but  $ \boldsymbol{x}\not \stackrel{\rm w}{\preceq}\boldsymbol{y} $.
}
\end{example}

Next we provide a set of technical results that will be used along the paper. 
First we introduce a lemma, which will be needed to prove our main results and is of interest in its own right.

\begin{lemma}\label{mkl}
{\rm The function $ \varphi: \mathbb R^{n}_{+}\longrightarrow \mathbb R$ satisfies

\begin{itemize}
\item[(i)] \begin{equation}\label{meq}
\boldsymbol{x}\preceq _{\rm wf}(\stackrel{\rm wf}{\preceq})\boldsymbol{y} \Longrightarrow \varphi(\boldsymbol{x}) \leq \varphi(\boldsymbol{y})
\end{equation}
if and only if, $ \varphi(f^{-1}(a_{1}),   \ldots,   f^{-1}(a_{n})) $ is Schur-convex in $ (a_{1},   \ldots,   a_{n}) $
and increasing (decreasing) in $ a_{i} $,   for $  i = 1,   \ldots ,   n $,  
\item[(ii)] \[
\boldsymbol{x}\stackrel{\rm fm}{\preceq}\boldsymbol{y} \Longrightarrow \varphi(\boldsymbol{x}) \leq (\geq)\varphi(\boldsymbol{y})
\]
if and only if, 
 $ \varphi(f^{-1}(a_{1}), \ldots, f^{-1}(a_{n})) $ is Schur-convex (Schur-concave) in $ (a_{1}, \ldots, a_{n}) $, where $ a_{i}=f(x_{i}) $, for $  i = 1, \ldots , n $.

\end{itemize}

where $ a_{i}=f(x_{i}) $,   for $  i = 1,   \ldots ,   n $ and $ f^{-1}(y) = \inf\{x | f(x)\geq y \} $.}
\end{lemma}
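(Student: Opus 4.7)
My plan is to exploit the fact that, by Definition~\ref{myd}, the three $f$-majorization orders are by construction the ordinary weak submajorization, weak supermajorization, and majorization orders applied to the transformed vectors $f(\boldsymbol{x})$ and $f(\boldsymbol{y})$. Accordingly, I introduce the change of variables $a_i = f(x_i)$, $b_i = f(y_i)$ and the auxiliary function
\begin{equation*}
g(a_1, \ldots, a_n) := \varphi\bigl(f^{-1}(a_1), \ldots, f^{-1}(a_n)\bigr),
\end{equation*}
so that $\varphi(\boldsymbol{x}) = g(\boldsymbol{a})$ and $\varphi(\boldsymbol{y}) = g(\boldsymbol{b})$. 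Each implication in the statement then becomes an implication for $g$ with respect to an ordinary majorization relation between $\boldsymbol{a}$ and $\boldsymbol{b}$.

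Once the problem is in this standard form, the result reduces to invoking the classical characterizations of majorization preservers found in \cite{met}. For part (i), Theorem~3.A.8 of \cite{met} states that $\boldsymbol{a} \preceq_{\rm w} \boldsymbol{b} \Rightarrow g(\boldsymbol{a}) \leq g(\boldsymbol{b})$ holds for every admissible pair if and only if $g$ is Schur-convex and increasing in each coordinate; the parallel statement for weak supermajorization requires Schur-convex and decreasing. For part (ii), the corresponding characterization (Theorem~3.A.4 of \cite{met}) asserts that $\boldsymbol{a} \stackrel{\rm m}{\preceq} \boldsymbol{b} \Rightarrow g(\boldsymbol{a}) \leq g(\boldsymbol{b})$ iff $g$ is Schur-convex, and the reversed inequality ``$\geq$'' follows by passing to $-g$ and invoking Schur-concavity. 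Both directions (``if'' and ``only if'') in the lemma are then immediate consequences of these classical equivalences.

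The only delicate point, which I expect to be the main obstacle, is the use of the generalized inverse $f^{-1}(y) = \inf\{x \mid f(x) \geq y\}$: the substitution $\boldsymbol{a} \leftrightarrow \boldsymbol{x}$ is a genuine bijection only on the image of $f$. In the ``if'' direction this causes no trouble, since the starting data are the original vectors $\boldsymbol{x}, \boldsymbol{y}$ and $g \circ f = \varphi$ tautologically. In the ``only if'' direction, the coordinatewise perturbations that drive the classical proofs (transfers along $e_i - e_j$ for Schur-convexity, along $e_i$ for monotonicity) must remain inside $f(\mathbb{R}^n_+)$; because $f$ is monotone with a connected image in the settings of interest, the domain of $g$ is an interval product and the standard arguments transfer without modification, which completes the characterization.
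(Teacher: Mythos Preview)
Your approach is correct and coincides with the paper's own proof: the paper also performs the change of variables $a_i=f(x_i)$, sets $\phi(\boldsymbol{a})=\varphi(f^{-1}(a_1),\ldots,f^{-1}(a_n))$, and invokes Theorem~3.A.8 of \cite{met} for part~(i), then says part~(ii) is similar. Your discussion of the generalized inverse in the ``only if'' direction is additional care that the paper's terse argument omits; note also that for part~(ii) the equivalence you attribute to Theorem~3.A.4 of \cite{met} is really just the definition of Schur-convexity (Theorem~3.A.4 is the Schur--Ostrowski differential criterion), but this does not affect the substance.
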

\begin{proof}
\begin{itemize}
\item[(i)] Using definition \ref{myd},   we see that \eqref{meq} is equivalent to
\[
\boldsymbol{a}\preceq_{\rm w}(\stackrel{\rm w}{\preceq})\boldsymbol{b} \Longrightarrow \varphi(f^{-1}(a_{1}),   \ldots,   f^{-1}(a_{n})) \leq \varphi(f^{-1}(b_{1}),   \ldots,   f^{-1}(b_{n})),  
\]
where $ a_{i}=f(x_{i}) $ and $ b_{i}=f(y_{i}) $,   for $  i = 1,   \ldots ,   n $. Taking 
\[
\phi(a_{1},   \ldots,   a_{n})=\varphi(f^{-1}(a_{1}),   \ldots,   f^{-1}(a_{n}))
\]
in Theorem 3.A.8 of \cite{met},   we get the required result.
\item[(ii)] This case can be proved in a very similar manner.
\end{itemize}
\end{proof}

It is noteworthy that Lemma 2.1 provided by Khaledi and Kochar \cite{kkkk} is a special case of Lemma \ref{mkl} (i) when $ f(x)=\log(x) $ and is useful for proving stochastic orders, see \cite{kh} and \cite{bee}. 
Recall that a real valued function $ \varphi $ defined on a set $ \mathscr{A}\in {\mathbb R}^{n} $ is said to be Schur-convex (Schur-concave) on $ \mathscr{A} $ if
\[
\boldsymbol{x} \stackrel{\rm m}{\preceq}\boldsymbol{y} \quad on\quad \mathscr{A} \Longrightarrow \varphi(\boldsymbol{x})\leq (\geq)\varphi(\boldsymbol{y}).
\]

\section{Applications to the comparison of extreme order statistics}

In this section we provide new results for the comparison of extreme values from independent Fr$\grave{\rm e}$chet distribution, scale and exponentiated scale model. We also derive new result on the usual stochastic order for largest order statistics of samples having exponentiated scale marginals and Archimedean copula structure. As we will see the main tools are the new $f$-majorization notions introduced in the previous section.  

\subsection{Comparison of extreme order statistics for the  Fr$\grave{\rm e}$chet distribution}

A random variable $X$ is said to be distributed according to the Fr$\grave{\rm e}$chet distribution, and will be denoted by $X\sim  {\rm Fr}\grave{\rm e}(\mu, \lambda, \alpha)  $, if the distribution function is given by 
\[
 G(x; \mu, \lambda, \alpha)=\exp\left\{-\left(\dfrac{x-\mu}{\lambda}\right)^{-\alpha}\right\},\quad x>\mu,
\]
where $\mu\in \mathbb R$ is a  location parameter, $ \lambda>0 $ is a scale parameter  and $ \alpha>0$ is a shape parameter. 

In this section, we discuss stochastic comparisons of series and parallel systems with Fr$\grave{\rm e}$chet distributed components in terms of the hazard rate order and the reverse hazard rate order. The result established here strengthens and generalizes some of the results of \cite{gup}. 
To begin with we present a generalization of Theorem 2 of  \cite{gup} where sufficient condition is based on the weak $f$-majorization.
This theorem provides the stochastic comparison result for the lifetime
of the parallel systems having independently distributed Fr$\grave{\rm e}$chet components
with varying scale parameters, but fixed location and shape parameters.

\begin{theorem}\label{f3}
{\rm Let $ X_{1}, \ldots , X_{n} $ ($ X^{*}_{1}, \ldots , X^{*}_{n} $) be independent random variables where $ X_{i} \sim {\rm Fr}\grave{\rm e}(\mu, \lambda_{i}, \alpha) $ ($ X^{*}_{i}\sim {\rm Fr}\grave{\rm e}(\mu, \lambda^{*}_{i}, \alpha) $), 
$ i = 1, \ldots , n $. Let us consider an strictly decreasing (increasing)  function $f$. 
\begin{itemize}
\item[(i)] If $ (f^{-1}(\cdot))^{\prime}(f^{-1}(\cdot))^{\alpha-1} $ is increasing (decreasing) and $(\lambda_1,\dots, \lambda_n) \stackrel{\rm wf}{\succeq}
(\lambda^{*}_{1}, \ldots , \lambda^{*}_{n}) $ then $ X_{n:n} \geq_{\rm rh} (\leq_{\rm rh}) X^{*}_{n:n} $.

\item[(ii)] If $ (f^{-1}(\cdot))^{\prime}(f^{-1}(\cdot))^{\alpha-1} $ is decreasing (increasing) and $ (\lambda^*_1,\dots, \lambda^*_n) \succeq _{\rm wf}
(\lambda_{1}, \ldots , \lambda_{n}) $ then $ X_{n:n} \geq_{\rm rh} (\leq_{\rm rh}) X^{*}_{n:n} $.
\end{itemize}
 }
\end{theorem}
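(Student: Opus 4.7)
The plan is to reduce the reverse hazard rate order $X_{n:n}\geq_{\rm rh}X^{*}_{n:n}$ (or the reverse) to an inequality between $\sum_i\lambda_i^{\alpha}$ and $\sum_i(\lambda^{*}_i)^{\alpha}$, then to dispatch that inequality by applying Lemma \ref{mkl}(i) to a separable symmetric function. The whole proof is essentially bookkeeping once the reduction is made.

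First, I would use independence to factor the distribution function of the parallel system. Since $X_i\sim{\rm Fr}\grave{\rm e}(\mu,\lambda_i,\alpha)$,
\begin{equation*}
F_{X_{n:n}}(x)=\prod_{i=1}^{n}G(x;\mu,\lambda_i,\alpha)=\exp\!\left\{-(x-\mu)^{-\alpha}\sum_{i=1}^{n}\lambda_i^{\alpha}\right\},\qquad x>\mu.
\end{equation*}
Differentiating $\log F_{X_{n:n}}$ gives
\begin{equation*}
\tilde r_{F_{X_{n:n}}}(x)=\alpha(x-\mu)^{-\alpha-1}\sum_{i=1}^{n}\lambda_i^{\alpha},
\end{equation*}
and the same identity holds with stars. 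Thus $X_{n:n}\geq_{\rm rh}X^{*}_{n:n}$ (respectively $\leq_{\rm rh}$) is equivalent to $\varphi(\boldsymbol{\lambda})\geq \varphi(\boldsymbol{\lambda}^{*})$ (respectively $\leq$), where $\varphi(\boldsymbol{\lambda}):=\sum_{i=1}^{n}\lambda_i^{\alpha}$.

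Next, I would compose $\varphi$ with $f^{-1}$ and let $a_i=f(\lambda_i)$. The key observation is that
\begin{equation*}
\varphi(f^{-1}(a_1),\dots,f^{-1}(a_n))=\sum_{i=1}^{n}h(a_i),\qquad h(a):=(f^{-1}(a))^{\alpha},
\end{equation*}
is separable and symmetric, so it is Schur-convex (Schur-concave) iff $h$ is convex (concave), and increasing (decreasing) in each $a_i$ iff $h$ is increasing (decreasing). A direct differentiation gives $h'(a)=\alpha(f^{-1}(a))^{\alpha-1}(f^{-1}(a))'$, so the sign of $h'$ is controlled by the monotonicity of $f$, while the monotonicity of $h'$ (i.e.\ the convexity/concavity of $h$) is controlled precisely by the monotonicity of the quantity $(f^{-1}(\cdot))^{\alpha-1}(f^{-1}(\cdot))'$ appearing in the hypothesis.

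Finally, I would run through the four sub-cases. In part (i) with $f$ strictly decreasing, $(f^{-1})'<0$ forces $h$ decreasing, and the assumption that $(f^{-1})'(f^{-1})^{\alpha-1}$ is increasing forces $h$ convex; hence $\sum_i h(a_i)$ is Schur-convex and decreasing in each $a_i$, and $(\lambda_1,\dots,\lambda_n)\stackrel{\rm wf}{\succeq}(\lambda^{*}_1,\dots,\lambda^{*}_n)$ combined with Lemma \ref{mkl}(i) gives $\varphi(\boldsymbol{\lambda}^{*})\leq \varphi(\boldsymbol{\lambda})$, i.e.\ $X_{n:n}\geq_{\rm rh}X^{*}_{n:n}$. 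For the strictly increasing sub-case of (i), the same analysis yields $h$ increasing and (under the reversed hypothesis) $h$ concave; applying Lemma \ref{mkl}(i) to $-\varphi$ gives the opposite inequality on $\varphi$ and hence $X_{n:n}\leq_{\rm rh}X^{*}_{n:n}$. Part (ii) is handled identically, the only change being that the premise is now weak $f$-submajorization $\boldsymbol{\lambda}\preceq_{\rm wf}\boldsymbol{\lambda}^{*}$, which explains why the monotonicity condition on $(f^{-1})'(f^{-1})^{\alpha-1}$ is reversed in the statement. The main (and only) obstacle is keeping the signs straight: in each of the four combinations of (direction of $f$) and (direction of $f$-majorization) one must check that the resulting combination of Schur-convexity/concavity with monotonicity of $\sum h(a_i)$ matches the version of Lemma \ref{mkl}(i) being invoked so that the inequality on $\varphi$ points the right way.
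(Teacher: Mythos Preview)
Your proof is correct and follows essentially the same route as the paper: both express the reversed hazard rate of $X_{n:n}$ as a sum of terms proportional to $(f^{-1}(a_i))^{\alpha}$, then invoke Lemma~\ref{mkl}(i) together with Proposition~3.C.1 of \cite{met} after checking that $h(a)=(f^{-1}(a))^{\alpha}$ has the right monotonicity and convexity under the stated hypotheses. Your version is marginally cleaner in that you first observe the $x$-dependence factors out as $\alpha(x-\mu)^{-\alpha-1}$, reducing the rh comparison to a single scalar inequality $\sum_i\lambda_i^{\alpha}\gtrless\sum_i(\lambda_i^{*})^{\alpha}$, whereas the paper carries the factor $\alpha(x-\mu)^{-\alpha-1}$ along and argues pointwise in $x$; but this is cosmetic and the substance is identical.
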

\begin{proof}

\begin{itemize}

\item[(i)] Let us consider a fixed $x>0$, and a strictly monotone function $f$, then the reversed hazard rate of $ X_{n:n}$ is given by 
\[
\tilde{r}_{X_{n:n}}(x; \mu, \boldsymbol{a}, \alpha)=\sum_{i=1}^{n}\dfrac{\alpha}{f^{-1}(a_{i})}\left(\dfrac{x-\mu}{f^{-1}(a_{i})}\right)^{-\alpha-1},\quad x>\mu.
\]

From Lemma \ref{mkl}, teh proof follows if we prove that, for each $ x>0 $, $\tilde{r}_{X_{n:n}}(x; \mu, \boldsymbol{a}, \alpha) $  is Schur-convex (Schur-concave) and decreasing (increasing) in $ a_{i}^{\prime} $s.

Let $ h(a_{i})=\alpha(x-\mu)^{-\alpha-1}(f^{-1}(a_{i}))^{\alpha} $. By the assumption, $ f $ is a strictly decreasing (increasing) function, therefore we have
\[
\frac{\partial h(a_{i})}{\partial a_{i}}=\alpha^{2}(x-\mu)^{-\alpha-1}  \frac{\partial f^{-1}(a_{i})}{\partial a_{i}}(f^{-1}(a_{i}))^{\alpha-1}\leq (\geq) 0.
\]

Hence the reverse hazard rate function of $ X_{n:n} $ is decreasing (increasing) in each $ a_{i} $. 

Now, from Proposition 3.C.1 of \cite{met}, the Schur-convexity (Schur-concavity) of of $ \tilde{r}_{X_{n:n}}(x; \mu, \boldsymbol{a}, \alpha) $, follows if we prove the  convexity (concavity) of  $ h $.  The convexity (concavity) of $h$ follows from the assumption $ (f^{-1}(\cdot))^{\prime}(f^{-1}(\cdot))^{\alpha-1}  $ is increasing (decreasing). This completes the proof of the required result.

\item[(ii)] The proof is similar to the proof of part (i) and hence is omitted.
\end{itemize}
\end{proof}

Let us describe some particular cases of previous theorem. 

In Theorem \ref{f3}, if we let $ f(x)=\frac{1}{x} $, we can get the following corollary that generalizes
the corresponding result in Theorem 2 of \cite{gup}. In particular the majorization assumption is relaxed to the weak majorization and the usual stochastic order is replaced by the stronger reversed hazard rate order.

\begin{corollary}
 {\rm Let $ X_{1}, \ldots , X_{n} $ ($ X^{*}_{1}, \ldots , X^{*}_{n} $) be independent random variables where $ X_{i} \sim {\rm Fr}\grave{\rm e}(\mu, \lambda_{i}, \alpha) $ ($ X^{*}_{i}\sim {\rm Fr}\grave{\rm e}(\mu, \lambda^{*}_{i}, \alpha) $),
$ i = 1, \ldots , n $. If $ (\dfrac{1}{\lambda_1},\dots, \dfrac{1}{\lambda_n}) \stackrel{\rm w}{\succeq}
(\dfrac{1}{\lambda^{*}_{1}}, \ldots , \dfrac{1}{\lambda^{*}_{n}}) $ then $ X_{n:n} \geq_{\rm rh} X^{*}_{n:n} $.}
\end{corollary}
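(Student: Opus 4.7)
The plan is to derive this corollary as a direct specialization of Theorem \ref{f3}(i) to the choice $f(x) = 1/x$, defined on $(0,\infty)$. This function is strictly decreasing, and its inverse is simply $f^{-1}(a) = 1/a$, so by Definition \ref{myd}(ii) the weak $f$-supermajorization relation $(\lambda_{1},\dots,\lambda_{n}) \stackrel{\rm wf}{\succeq} (\lambda_{1}^{*},\dots,\lambda_{n}^{*})$ coincides with $\bigl(\tfrac{1}{\lambda_{1}},\dots,\tfrac{1}{\lambda_{n}}\bigr) \stackrel{\rm w}{\succeq} \bigl(\tfrac{1}{\lambda_{1}^{*}},\dots,\tfrac{1}{\lambda_{n}^{*}}\bigr)$. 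Therefore the hypothesis of the corollary is exactly the hypothesis of Theorem \ref{f3}(i) under this choice of $f$.

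The one nontrivial verification is the auxiliary monotonicity hypothesis, namely that the function $a \mapsto (f^{-1}(a))' \cdot (f^{-1}(a))^{\alpha-1}$ is increasing on $(0,\infty)$. Since $(f^{-1}(a))' = -1/a^{2}$ and $(f^{-1}(a))^{\alpha-1} = a^{1-\alpha}$, I would compute
\[
(f^{-1}(a))' (f^{-1}(a))^{\alpha-1} \;=\; -\,a^{-(\alpha+1)},
\]
whose derivative with respect to $a$ equals $(\alpha+1)\, a^{-(\alpha+2)} > 0$ for every $\alpha > 0$. Hence the monotonicity hypothesis is satisfied.

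Having confirmed both conditions of Theorem \ref{f3}(i), the conclusion $X_{n:n} \geq_{\rm rh} X_{n:n}^{*}$ follows immediately. There is no substantive obstacle here beyond the bookkeeping of signs and the verification of these elementary monotonicity facts; the real work was already carried out in Theorem \ref{f3}, which in turn relied on Lemma \ref{mkl} to translate the $f$-majorization condition into the Schur-convexity of the reversed hazard rate of $X_{n:n}$ viewed as a function of $(f(\lambda_{1}),\dots,f(\lambda_{n}))$.
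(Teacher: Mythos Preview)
Your proposal is correct and follows exactly the paper's own approach: the paper simply states that the corollary is obtained from Theorem~\ref{f3} by taking $f(x)=1/x$, and you carry out precisely that specialization, additionally supplying the explicit verification that $(f^{-1}(a))'(f^{-1}(a))^{\alpha-1}=-a^{-(\alpha+1)}$ is increasing for all $\alpha>0$, which the paper leaves implicit.
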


In Theorem \ref{f3}, if we let $ f(x) = x $, we can easily get the following result.

\begin{corollary}\label{f4}
{\rm
  Let $ X_{1}, \ldots , X_{n} $ ($ X^{*}_{1}, \ldots , X^{*}_{n} $) be independent random variables where $ X_{i} \sim {\rm Fr}\grave{\rm e}(\mu, \lambda_{i}, \alpha) $ ($ X^{*}_{i}\sim {\rm Fr}\grave{\rm e}(\mu, \lambda^{*}_{i}, \alpha) $),
$ i = 1, \ldots , n $. 
\begin{itemize}
\item[(i)] If $ \alpha\geq1 $, and $ (\lambda_1,\ldots,\lambda_n) \succeq_{\rm w}
(\lambda^{*}_{1}, \ldots , \lambda^{*}_{n}) $ then $ X_{n:n} \geq_{\rm rh} X^{*}_{n:n} $.

\item[(ii)] If $ 0<\alpha\leq1 $ and $ (\lambda_1,\dots,\lambda_n) \stackrel{\rm w}{\succeq}
(\lambda^{*}_{1}, \ldots , \lambda^{*}_{n}) $ then $ X_{n:n} \leq_{\rm rh} X^{*}_{n:n} $.
\end{itemize}
}
\end{corollary}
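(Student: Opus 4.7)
The plan is simply to specialize Theorem \ref{f3} to the identity function $f(x)=x$, for which $f$ is strictly increasing on $(0,\infty)$, $f^{-1}(x)=x$, and $(f^{-1}(\cdot))^{\prime}(f^{-1}(\cdot))^{\alpha-1}=x^{\alpha-1}$. Under this choice the $f$-majorization relations $\stackrel{\rm wf}{\succeq}$ and $\succeq_{\rm wf}$ collapse to the ordinary weak super- and weak sub-majorization orders $\stackrel{\rm w}{\succeq}$ and $\succeq_{\rm w}$, respectively, so the hypotheses of the corollary become exactly the hypotheses of Theorem \ref{f3} modulo checking the monotonicity of $x^{\alpha-1}$.

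For part (ii), I would observe that $x^{\alpha-1}$ is decreasing on $(0,\infty)$ precisely when $0<\alpha\le1$, and then apply the parenthetical (strictly increasing $f$) branch of Theorem \ref{f3}(i). That branch takes as its majorization hypothesis $(\lambda_1,\ldots,\lambda_n)\stackrel{\rm wf}{\succeq}(\lambda_{1}^{*},\ldots,\lambda_{n}^{*})$, which with $f=\mathrm{id}$ is exactly the assumed $(\lambda_1,\ldots,\lambda_n)\stackrel{\rm w}{\succeq}(\lambda_{1}^{*},\ldots,\lambda_{n}^{*})$, and yields $X_{n:n}\le_{\rm rh}X^{*}_{n:n}$, which is the claim.

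For part (i), I would note that $x^{\alpha-1}$ is increasing on $(0,\infty)$ precisely when $\alpha\ge1$, and then invoke the parenthetical branch of Theorem \ref{f3}(ii) after interchanging the roles of the two samples. The parenthetical branch, stated verbatim, gives under $(\lambda_1^{*},\ldots,\lambda_n^{*})\succeq_{\rm wf}(\lambda_{1},\ldots,\lambda_{n})$ the conclusion $X_{n:n}\le_{\rm rh}X^{*}_{n:n}$. Relabeling the two samples (i.e.\ applying the theorem with the primed and unprimed vectors exchanged) converts the hypothesis into $(\lambda_1,\ldots,\lambda_n)\succeq_{\rm w}(\lambda_{1}^{*},\ldots,\lambda_{n}^{*})$ and the conclusion into $X^{*}_{n:n}\le_{\rm rh}X_{n:n}$, i.e.\ $X_{n:n}\ge_{\rm rh}X^{*}_{n:n}$, as required.

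There is essentially no real obstacle: once $f$ is set to the identity, the monotonicity of $x^{\alpha-1}$ automatically selects the correct branch of Theorem \ref{f3}, and the only bookkeeping is to match the direction of the weak (super- vs.\ sub-) majorization with the direction of the reversed hazard rate order, which for part (i) is the reason for the sample swap above.
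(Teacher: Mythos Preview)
Your proposal is correct and follows exactly the paper's approach: the paper simply states that Corollary~\ref{f4} is obtained from Theorem~\ref{f3} by taking $f(x)=x$, and your write-up fills in the routine details of matching the parenthetical (increasing $f$) branches with the monotonicity of $x^{\alpha-1}$. The sample swap you perform for part~(i) is the natural way to reconcile the direction of the inequality with the theorem's phrasing, and there is nothing more to add.
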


The following theorem present a generalization of Theorem 1 of  \cite{gup} where sufficient condition is based on the weak submajorization and by Lemma \ref{m2} (ii) is true under weak \textit{f} submajorization for any increasing concave function $f$ of the location parameters. 
 
\begin{theorem}\label{f1}
 \rm Let $ X_{1}, \ldots , X_{n} $ ($ X^{*}_{1}, \ldots , X^{*}_{n} $) be independent random variables where $ X_{i} \sim {\rm Fr}\grave{\rm e}(\mu_{i}, \lambda, \alpha) $ ($ X^{*}_{i}\sim {\rm Fr}\grave{\rm e}(\mu^{*}_{i}, \lambda, \alpha) $),
$ i = 1, \ldots , n $. If $ (\mu_1,\dots,\mu_n) \succeq_{\rm w}
(\mu^{*}_{1}, \ldots , \mu^{*}_{n}) $, then $ X_{n:n} \geq_{\rm rh} X^{*}_{n:n} $.
\end{theorem}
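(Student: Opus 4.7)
My plan is to reduce the claim to a Schur-convexity statement for the reversed hazard rate of the maximum, and then invoke Theorem 3.A.8 of \cite{met} (equivalently Lemma \ref{mkl}(i) with $f(t)=t$) to cash out the weak submajorization hypothesis. Specifically, since $F_{X_{n:n}}(x)=\prod_{i=1}^{n} F_{X_i}(x)$, for $x>\max_i\mu_i$ one has
\[
\tilde{r}_{X_{n:n}}(x;\boldsymbol{\mu},\lambda,\alpha)
=\sum_{i=1}^{n}\frac{\alpha}{\lambda}\left(\frac{x-\mu_i}{\lambda}\right)^{-\alpha-1}
=\sum_{i=1}^{n} h(\mu_i),
\]
where, for each fixed $x$, $h(\mu)=\alpha\lambda^{\alpha}(x-\mu)^{-\alpha-1}$. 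It will therefore suffice to show that, for every fixed $x$ exceeding all location parameters involved, the separable map $\Psi_x(\boldsymbol{\mu}):=\sum_i h(\mu_i)$ is both increasing in each coordinate and Schur-convex on the admissible region, because then the weak submajorization $\boldsymbol{\mu}^*\preceq_w\boldsymbol{\mu}$ immediately yields $\Psi_x(\boldsymbol{\mu}^*)\le\Psi_x(\boldsymbol{\mu})$.

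Both properties reduce to a one-variable calculation on $h$. Differentiating twice one finds
\[
h'(\mu)=\alpha(\alpha+1)\lambda^{\alpha}(x-\mu)^{-\alpha-2}>0,\qquad h''(\mu)=\alpha(\alpha+1)(\alpha+2)\lambda^{\alpha}(x-\mu)^{-\alpha-3}>0,
\]
so $h$ is strictly increasing and strictly convex on $(-\infty,x)$. Coordinatewise monotonicity of $\Psi_x$ is then immediate from $h'>0$, and Schur-convexity of a separable sum of convex one-variable functions follows from Proposition 3.C.1 of \cite{met}. With these two ingredients, applying Theorem 3.A.8 of \cite{met} to $\Psi_x$ gives $\tilde{r}_{X^*_{n:n}}(x)\le\tilde{r}_{X_{n:n}}(x)$ pointwise on the common positive-density region $(\mu_{(n)},\infty)$, which is exactly the inequality the reversed hazard rate order requires.

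The one delicate point, which I would handle carefully at the end, is the support mismatch: weak submajorization forces $\mu_{(n)}\ge\mu^*_{(n)}$, so on the interval $(\mu^*_{(n)},\mu_{(n)}]$ the CDF $F_{X_{n:n}}$ vanishes while $F_{X^*_{n:n}}$ is strictly positive, and a naive pointwise comparison of $\tilde r$'s breaks down there. The cleanest way around this is to pass to the equivalent characterization of $\le_{rh}$ via monotonicity of the ratio $F_{X_{n:n}}/F_{X^*_{n:n}}$: this ratio is identically $0$ for $x\le\mu_{(n)}$ and, on $(\mu_{(n)},\infty)$, its logarithmic derivative equals $\tilde{r}_{X_{n:n}}-\tilde{r}_{X^*_{n:n}}\ge 0$ by the previous paragraph, so the ratio is nondecreasing on all of $\mathbb{R}$ and $X_{n:n}\ge_{rh}X^*_{n:n}$ follows. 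This bookkeeping is really the only obstacle; the Schur-convexity step itself is a routine consequence of the separable decomposition of $\tilde r_{X_{n:n}}$ and the strict convexity of $(x-\mu)^{-\alpha-1}$ in $\mu$.
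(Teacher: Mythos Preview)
Your proof is correct and follows essentially the same route as the paper's: write the reversed hazard rate of $X_{n:n}$ as the separable sum $\sum_i h(\mu_i)$ with $h(\mu)=\alpha\lambda^{\alpha}(x-\mu)^{-\alpha-1}$, verify $h'>0$ and $h''>0$, invoke Proposition~3.C.1 of \cite{met} for Schur-convexity, and conclude via Theorem~3.A.8 of \cite{met}. The one difference is that you additionally handle the support mismatch on $(\mu^*_{(n)},\mu_{(n)}]$ via the ratio characterization of $\leq_{\rm rh}$, a point the paper's proof passes over in silence; your treatment here is more careful than the original.
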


\begin{proof} It can be seen that the reversed hazard rate of $X_{n:n}$ is given by 
\[
\tilde{r}_{X_{n:n}}(x; \boldsymbol{\mu}, \lambda, \alpha)=\sum_{i=1}^{n}\dfrac{\alpha}{\lambda}\left(\dfrac{x-\mu_{i}}{\lambda}\right)^{-\alpha-1},\quad x>\max(\mu_{1}, \ldots , \mu_{n}).
\]

From Theorem 3.A.8 of \cite{met}, the proof follows if we prove that $ \tilde{r}_{X_{n:n}}(x; \boldsymbol{\mu}, \lambda, \alpha) $  is Schur-convex and increasing in $ \mu_{i}^{\prime} $s.

Let
\[
h(\mu_{i})=\dfrac{\alpha}{\lambda}\left(\dfrac{x-\mu_{i}}{\lambda}\right)^{-\alpha-1},
\]
then we have
\[
\frac{\partial h(\mu_{i})}{\partial \mu_{i}}=\dfrac{\alpha}{\lambda^{-\alpha}}(\alpha+1)(x-\mu_{i})^{-\alpha-2}  \geq0.
\]

Therefore the reverse hazard rate function of $ X_{n:n} $ is increasing in each $ \mu_{i} $. 

Now, from Proposition 3.C.1 of \cite{met}, we only need to prove the convexity of  $ h $ to get the Schur-convexity of $ \tilde{r}_{X_{n:n}}(x; \boldsymbol{\mu}, \lambda, \alpha) $. 

In this case, we have that
\begin{align*}
\frac{\partial^{2} h(\mu_{i})}{\partial \mu_{i}^{2}} & =\dfrac{\alpha}{\lambda^{-\alpha}}(\alpha+1)(\alpha+2)(x-\mu_{i})^{-\alpha-3}.
\end{align*}

Therefore we have that $ h $ is a convex function. This completes the proof.
\end{proof}

Note that $ (\mu_1,\dots,\mu_n)  \stackrel{\rm m}{\succeq}
(\mu^{*}_{1}, \ldots , \mu^{*}_{n}) $ implies $ (\mu_1,\dots,\mu_n) \succeq_{\rm w}
(\mu^{*}_{1}, \ldots , \mu^{*}_{n}) $,
Theorem \ref{f1} substantially improves Theorem 1 of  \cite{gup}.

\subsection{\rm Comparison of extreme values for scale model}

Independent random variables $ X_{1}, \ldots , X_{n} $ are said to belong to the scale family of distributions if  $ X_i\sim G(\lambda_i x)  $ where $ \lambda_i>0 $, $  i=1,\dots,n $ and $ G $ is called the baseline distribution and is an absolutely continuous distribution function with density function $ g $.
In the Theorem \ref{st1} we extend result of theorem 2.1 of \cite{kh} to the case when the two sets of scale parameters weakly majorize each other instead of usual majorization which by Lemma \ref{m2} is true under weak \textit{f} majorization of the scale parameters.
\begin{theorem}\label{st1}
{\rm Suppose $ X_i $ and $ X^{*}_i $ as in the setting of Theorem \ref{st1}. If $ xr(x) $ is increasing in $ x $, $ x^{2}r^{\prime}(x) $ is decreasing (increasing) in $ x $ and $ (\lambda_1,\dots,\lambda_n) \stackrel{\rm w}{\succeq}(\succeq_{\rm w})
(\lambda^{*}_{1}, \ldots , \lambda^{*}_{n}) $, then
\begin{itemize}
\item[(i)]  $X_{1:n}\geq_{\rm hr} (\leq_{\rm hr}) X^{*}_{1:n}$, and
\item[(ii)] if $ r(x) $ is decreasing then $ X_{1:n}\geq_{\rm disp} (\leq_{\rm disp}) X^{*}_{1:n} $.
\end{itemize}
}
\end{theorem}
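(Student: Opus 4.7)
The plan is to work with the hazard rate of the minimum directly. By independence, the survival function of $X_{1:n}$ is $\prod_{i=1}^n \bar G(\lambda_i x)$, so
\[
r_{X_{1:n}}(x) \;=\; \sum_{i=1}^n \lambda_i\, r(\lambda_i x).
\]
Set $\psi(t):=t\,r(t)$, so that $r_{X_{1:n}}(x)=\frac{1}{x}\sum_{i=1}^n \psi(\lambda_i x)$, and view this, for fixed $x>0$, as a symmetric function $\phi(\boldsymbol\lambda)$ of the scale parameters. The strategy is to characterise the monotonicity and Schur properties of $\phi$ in terms of the two baseline conditions and then read off the hazard rate comparison from Theorem 3.A.8 of \cite{met} (equivalently, Lemma \ref{mkl} with $f(t)=t$).

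First, since $\psi'(t)=r(t)+tr'(t)$, a direct calculation gives
\[
\partial \phi/\partial \lambda_i \;=\; \psi'(\lambda_i x).
\]
Thus the assumption that $xr(x)$ is increasing, i.e.\ $\psi$ increasing, is exactly the statement that $\phi$ is coordinatewise increasing. Schur's condition $(\lambda_i-\lambda_j)\bigl(\partial_i\phi-\partial_j\phi\bigr)\ge 0$ reduces to monotonicity of $\psi'$. Since
\[
\bigl(t^{2}r'(t)\bigr)' \;=\; 2t\,r'(t)+t^{2}r''(t) \;=\; t\bigl(2r'(t)+tr''(t)\bigr) \;=\; t\,\psi''(t),
\]
the condition ``$x^{2}r'(x)$ decreasing'' is equivalent, on $(0,\infty)$, to $\psi''\le 0$, i.e.\ to $\psi$ being concave and $\phi$ Schur-concave. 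The mirror condition gives $\phi$ Schur-convex.

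For part (i), suppose $x^{2}r'(x)$ is decreasing, so $\phi$ is increasing and Schur-concave. The hypothesis $\boldsymbol\lambda \stackrel{\rm w}{\succeq}\boldsymbol\lambda^{*}$ reads $\boldsymbol\lambda^{*}\stackrel{\rm w}{\preceq}\boldsymbol\lambda$, so applying Theorem 3.A.8 of \cite{met} to $-\phi$ (which is then decreasing and Schur-convex) yields $\phi(\boldsymbol\lambda)\le \phi(\boldsymbol\lambda^{*})$ for every $x>0$. This is exactly $r_{X_{1:n}}(x)\le r_{X^{*}_{1:n}}(x)$, giving $X_{1:n}\ge_{\rm hr}X^{*}_{1:n}$. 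The parenthetical case (with $x^{2}r'(x)$ increasing and weak sub-majorization) is identical: now $\phi$ is increasing and Schur-convex, and Theorem 3.A.8 of \cite{met} yields the reverse inequality.

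For part (ii) I would use (i) together with the classical result of Bagai and Kochar that $X\le_{\rm hr}Y$ combined with the DFR property of one of $X$, $Y$ implies $X\le_{\rm disp}Y$. The extra hypothesis that $r$ is decreasing makes each summand $\lambda_i r(\lambda_i x)$ decreasing in $x$, hence $r_{X_{1:n}}$ is decreasing and $X_{1:n}$ is DFR (likewise $X^{*}_{1:n}$), so the dispersive order follows from (i). The only real obstacle I anticipate is bookkeeping with the two weak-majorization directions and their interplay with the sign of $\psi''$; once the identity $(t^{2}r'(t))'=t\,\psi''(t)$ is isolated and Marshall--Olkin 3.A.8 is invoked in the correct variant (increasing/Schur-convex vs.\ decreasing/Schur-convex), the rest is routine.
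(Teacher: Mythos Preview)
Your proof is correct and follows essentially the same route as the paper: write $r_{X_{1:n}}(x)=\frac{1}{x}\sum_i \psi(\lambda_i x)$ with $\psi(t)=tr(t)$, observe that the identity $(t^{2}r'(t))'=t\,\psi''(t)$ converts the hypothesis on $x^{2}r'(x)$ into concavity/convexity of $\psi$, deduce the Schur property (the paper does this via Proposition 3.C.1 of \cite{met} rather than Schur's condition directly), and apply Theorem 3.A.8 of \cite{met}; for (ii) the paper likewise appeals to Bagai--Kochar \cite{bag} (together with \cite{bar}) using the DFR property of the minimum. Your handling of the weak-majorization direction by passing to $-\phi$ is in fact more explicit than the paper's.
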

\begin{proof}
(i) Fix $ x>0 $. Then the hazard rate of $ X_{1:n} $ is
\[
r_{X_{1:n}}(x,\boldsymbol{\lambda})=\sum_{i=1}^{n}\lambda_{i}r(\lambda_{i}x)=\frac{\sum_{i=1}^{n}\varphi(\lambda_{i}x)}{x},
\]
where $ \varphi(u)=ur(u) $, $ u\geq0 $. From Theorem 3.A.8 of \cite{met}, it suffices to show that, 
for each $ x>0 $, $ r_{X_{1:n}}(x, \boldsymbol{\lambda}) $  is Schur-concave (Schur-convex) and increasing in $ \lambda_{i}^{\prime} $s.
By the assumptions, $ \varphi(u) $ is increasing in $ u $, then the hazard
rate function of $ X_{1:n} $ is increasing in each $ \lambda_{i} $. 

Now, from Proposition 3.C.1 of \cite{met}, the concavity (convexity) of  $ \varphi(\lambda_{i}x) $ is
needed to prove Schur-concavity (Schur-convexity) of $ r_{X_{1:n}}(x,\boldsymbol{\lambda}) $. Note that the assumption that $ u^{2}r^{\prime}(u) $ is decreasing (increasing) in $ u $ is equivalent to $ r(u) + ur^{\prime}(u) $ is decreasing (increasing) in $ u $ since
\begin{equation*}
\left[ u^{2}r^{\prime}(u) \right]^{\prime} = u(2r^{\prime}(u)+ur^{\prime \prime}(u))
 = u\left[r(u) + ur^{\prime}(u)\right]^{\prime},
\end{equation*}
and $ r(u) + ur^{\prime}(u) $ is decreasing (increasing) in $ u $ is equivalent to $ ur(u) $ is concave (convex) in $ u $ since
\begin{equation*}
\left[ur(u)\right]^{\prime} = r(u) + ur^{\prime}(u).
\end{equation*}
Hence, $ \varphi(u) $ is concave (convex). This completes the proof of part (i).

(ii) Using the assumption that $ r(x) $ is decreasing in $ x $ and part (i), the required result follows from Theorem 2.1 in \cite{bag} and Theorem 1 in \cite{bar}.
\end{proof}

Note that the conditions of Theorem \ref{st1} are satisfied by the generalized gamma distribution as \cite{kh} proved that for $ X\sim GG(p,q) $, $ xr(x) $ an increasing function of $ x $ and $ x^{2}r^{\prime}(x) $ is an
increasing function of $ x $ when $ p,q> 1 $ and is a decreasing function of $ x $ when $ p,q<1 $. Recall that a random variable $ X $ has a generalized gamma distribution, denoted by $ X\sim GG(p,q) $, when its density function has the following form
\[
g(x)=\frac{p}{\Gamma(\frac{q}{p})}x^{q-1}e^{-x^{p}}, \quad x>0,
\]
where $ p,q>0 $ are the shapes parameters. The conditions of Theorem \ref{st1} are also satisfied by the Weibull distribution because for $ X_{i} \thicksim W(\alpha, \lambda) $, $ xr(x) $ is an increasing function of $ x $ and $ x^{2}r^{\prime}(x) $ is an
increasing function of $ x $ when $ \alpha \geq1 $ and is a decreasing function of $ x $ when $ \alpha\leq1 $, so Theorem \ref{st1} is also a generalization of Theorem 2.3 of \cite{k}.

Lastly, we get some new results on the lifetimes of parallel systems in terms of the usual stochastic order. It is noteworthy that
\cite{kh} in Theorem 2.1 proved Theorem \ref{st3} when $ f(x)=\log(x) $ and
\cite{kk} in Theorem 2.2 proved Theorem \ref{st3} when the baseline distribution in the scale model is exponential and $ f(x)=\log(x) $.

\begin{theorem}\label{st3}
{\rm Let $ X_{1}, \ldots , X_{n} $  be a set of independent nonnegative random variables with $ X_i\sim G(\lambda_i x)  $, $  i=1,\dots,n $. Let $ X^{*}_{1}, \ldots , X^{*}_{n} $ be another set of independent nonnegative
random variables with $ X^{*}_i\sim G(\lambda^{*}_i x) $, $ i=1,\dots,n $. If $ (f^{-1})^{\prime}(y)\tilde{r}(f^{-1}(y)) $ is decreasing in $ y $, where $ f $ is a strictly increasing function, then 
\begin{equation}\label{sq22}
 (\lambda_1,\dots,\lambda_n) \stackrel{\rm wf}{\succeq}
(\lambda^{*}_{1}, \ldots , \lambda^{*}_{n})\Longrightarrow X_{n:n}\geq_{\rm st} X^{*}_{n:n}.
\end{equation}
}
\end{theorem}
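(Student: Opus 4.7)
The plan is to read off the usual stochastic order from the CDFs and then invoke Lemma \ref{mkl}(i). Independence of the $X_i$'s gives $F_{X_{n:n}}(x;\boldsymbol\lambda)=\prod_{i=1}^n G(\lambda_i x)$, so $X_{n:n}\geq_{\rm st} X^{*}_{n:n}$ is equivalent to
\begin{equation*}
\Psi(\boldsymbol\lambda) := -\sum_{i=1}^n \log G(\lambda_i x)\;\geq\;\Psi(\boldsymbol\lambda^*) \quad \text{for every } x>0.
\end{equation*}
I fix $x>0$ from here on and view $\Psi$ as a function of $\boldsymbol\lambda\in\mathbb R_{+}^{n}$ only.

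Setting $\boldsymbol y=\boldsymbol\lambda$ and $\boldsymbol u=\boldsymbol\lambda^*$, the hypothesis $\boldsymbol\lambda\stackrel{\rm wf}{\succeq}\boldsymbol\lambda^*$ becomes $\boldsymbol u\stackrel{\rm wf}{\preceq}\boldsymbol y$, and what I need, $\Psi(\boldsymbol u)\leq \Psi(\boldsymbol y)$, is precisely the implication furnished by the parenthesised form of Lemma \ref{mkl}(i). That lemma reduces the task to showing that
\begin{equation*}
\Phi(a_1,\ldots,a_n) := \Psi(f^{-1}(a_1),\ldots,f^{-1}(a_n)) = -\sum_{i=1}^n \log G(f^{-1}(a_i)\,x)
\end{equation*}
is Schur-convex and decreasing in each coordinate $a_i$.

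The monotonicity falls out by differentiating a single summand:
\begin{equation*}
\frac{\partial \Phi}{\partial a_i} = -x\,(f^{-1})'(a_i)\,\tilde r(f^{-1}(a_i)\,x) \leq 0,
\end{equation*}
since $f$ strictly increasing gives $(f^{-1})'\geq 0$ and the remaining factors are positive. For Schur-convexity, $\Phi$ is separable---a sum of one and the same univariate function of $a_i$---so by Proposition 3.C.1 of \cite{met} it suffices to check the convexity of $a\mapsto -\log G(f^{-1}(a)x)$ in $a$. Differentiating once more, this amounts to requiring $(f^{-1})'(a)\,\tilde r(f^{-1}(a)\,x)$ to be decreasing in $a$ for each fixed $x>0$, which is the content of the standing hypothesis (read with the scale factor $x$ carried inside $\tilde r$).

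The main obstacle is this Schur-convexity step; the monotonicity is essentially immediate from the sign of $(f^{-1})'$, and the translation into Lemma \ref{mkl}(i) is bookkeeping once the right $\Psi$ is isolated. The heart of the argument is the observation that the assumed decrease of $(f^{-1})'(y)\tilde r(f^{-1}(y))$ forces convexity of each summand of $\Phi$, which Proposition 3.C.1 of \cite{met} then promotes to Schur-convexity of $\Phi$ on $\mathbb R_{+}^{n}$, yielding the desired $X_{n:n}\geq_{\rm st} X^{*}_{n:n}$.
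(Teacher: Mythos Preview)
Your argument follows the paper's route: invoke Lemma \ref{mkl}(i) and verify that the relevant function of $(a_1,\ldots,a_n)$ is Schur-convex and decreasing in each coordinate, both reductions landing on the same decisive requirement that $a\mapsto (f^{-1})'(a)\,\tilde r\bigl(f^{-1}(a)\,x\bigr)$ be decreasing for each fixed $x>0$. The only cosmetic difference is that you work with the separable function $-\log F_{X_{n:n}}$ and so can cite Proposition~3.C.1 of \cite{met} (sum of convex functions is Schur-convex), whereas the paper works with $\bar G_{X_{n:n}}=1-\prod_i G(f^{-1}(a_i)t)$, which is not separable, and therefore checks the Schur--Ostrowski condition of Theorem~3.A.4 directly; your route is marginally tidier but not genuinely different. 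One caution: your parenthetical ``read with the scale factor $x$ carried inside $\tilde r$'' is exactly the step the paper also asserts without further comment, but note that the stated hypothesis concerns $(f^{-1})'(y)\tilde r(f^{-1}(y))$ with no scale factor, and passing to $(f^{-1})'(a)\tilde r(f^{-1}(a)x)$ for general $x$ is not automatic for arbitrary $f$ --- it does go through for $f=\log$ (the $p$-larger case) by the substitution $v=f^{-1}(a)x$, so your hand-wave matches the paper's but is not a proof in full generality.
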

\begin{proof}
{\rm The survival function of $ X_{n:n} $ can be written as
\begin{equation}\label{sq2233}
\bar{G}_{X_{n:n}}(t, \boldsymbol{a})= 1- \prod_{i=1}^{n} G(f^{-1}(a_{i})t)
\end{equation}
where $ a_{i}=f(\lambda_{i}) $, for $  i = 1, \ldots , n $. Using Lemma \ref{mkl}, it is enough to show that the function $ \bar{G}_{X_{n:n}}(t, \boldsymbol{a}) $ given in \eqref{sq2233} is Schur-convex and decreasing in $ a_{i}^{\prime} $s. To prove its Schur-convexity, it follows from Theorem 3.A.4. in \cite{met} that we
have to show that for $ i\neq j $,
\[
(a_{i}-a_{j})\bigg(\frac{\partial \bar{G}_{X_{n:n}}}{\partial a_{i}}-\frac{\partial \bar{G}_{X_{n:n}}}{\partial a_{j}} \bigg)\geq 0,
\]
that is,for $ i\neq j $,
\begin{equation}\label{sq32}
(a_{i}-a_{j}) \prod_{k=1}^{n} G(f^{-1}(a_{k})t)\bigg(t(f^{-1})^{\prime}(a_{j}) \frac{g(f^{-1}(a_{j})t)}{G(f^{-1}(a_{j})t)}-t(f^{-1})^{\prime}(a_{i}) \frac{g(f^{-1}(a_{i})t)}{G(f^{-1}(a_{i})t)}\bigg)\geq 0.
\end{equation}
The assumption $ (f^{-1})^{\prime}(y)\tilde{r}(f^{-1}(y)) $ is decreasing in $ y $ implies that the function $ t(f^{-1})^{\prime}(a_{i})\tilde{r}(f^{-1}(a_{j})t) $ is decreasing in $ a_{i} $, for $  i = 1, \ldots , n $, from which it follows
that \eqref{sq32} holds. The partial derivative of $ \bar{G}_{X_{n:n}}(t, \boldsymbol{a}) $ with respect to $ a_{i} $ is negative,which in turn implies that the survival
function of $ X_{n:n} $ is decreasing in $ a_{i} $ for $ i = 1, \ldots , n $. This completes the proof of the required result.}
\end{proof}

\subsection{\rm Comparison of extreme values for exponentiated scale model}

Recall that random variable $X$ belongs to the exponentiated scale family of distributions if $ X\sim H(x) = [G(\lambda x)]^\alpha $, where $\alpha, \lambda>0 $ and $ G $ is called the baseline distribution and is an
absolutely continuous distribution function. We denote this family by $ {\rm ES}(\alpha, \lambda) $. Bashkar et al. \cite{btr} discussed stochastic comparisons of extreme order statistics from independent heterogeneous exponentiated scale samples.
In this section we provide new results for the comparison of smallest order statistics from samples following exponentiated scale model.
In the following theorem, we compare series systems with independent
heterogeneous ES components when one of the parameters is fixed, and the
results are then developed with respect to the other parameter. Again by Lemma \ref{m2}, this result are true under weak $ f $-supermajorization where $ f $ is a non-negative strictly increasing convex function.

\begin{theorem}\label{es11}
{\rm Let $ X_{1}, \ldots , X_{n} $ ($ X^{*}_{1}, \ldots , X^{*}_{n} $) be independent random variables with $ X_{i} \thicksim {\rm ES}(\alpha_i, \lambda)$ ($ X^{*}_{i} \thicksim {\rm ES}(\alpha^{*}_{i}, \lambda)$), $i = 1, . . . , n $. Then, for any $ \lambda>0 $, we
have
\[
\boldsymbol{\alpha}\stackrel{\rm w}{\succeq}\boldsymbol{\alpha^{*}}  \Longrightarrow 
 X_{1:n}\leq_{\rm hr}X^{*}_{1:n}.
\]
}
\end{theorem}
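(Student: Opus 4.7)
The plan is to compute the hazard rate of $X_{1:n}$ explicitly in the exponentiated scale model and reduce the theorem to a univariate convexity/monotonicity statement. Fix $x>0$ and set $t=G(\lambda x)\in(0,1)$; then $H_{i}(x)=t^{\alpha_{i}}$, so logarithmic differentiation of $\bar H_{X_{1:n}}(x)=\prod_{i=1}^{n}(1-t^{\alpha_{i}})$ gives
\[
r_{X_{1:n}}(x,\boldsymbol{\alpha})=\lambda g(\lambda x)\sum_{i=1}^{n}\frac{\alpha_{i}\,t^{\alpha_{i}-1}}{1-t^{\alpha_{i}}}=\frac{\lambda g(\lambda x)}{t}\sum_{i=1}^{n}\phi(\alpha_{i}),\qquad \phi(\alpha):=\frac{\alpha\, t^{\alpha}}{1-t^{\alpha}}.
\]
Since $X_{1:n}\leq_{\rm hr}X_{1:n}^{*}$ is equivalent to the pointwise inequality $r_{X_{1:n}}(x,\boldsymbol{\alpha})\geq r_{X_{1:n}}(x,\boldsymbol{\alpha}^{*})$, and the prefactor $\lambda g(\lambda x)/t$ is positive and independent of $\boldsymbol{\alpha}$, it will suffice to prove that for every fixed $t\in(0,1)$ the map $\boldsymbol{\alpha}\mapsto\sum_{i=1}^{n}\phi(\alpha_{i})$ is decreasing in each coordinate and Schur-convex; applying Theorem~3.A.8 of \cite{met} to $\boldsymbol{\alpha}^{*}\stackrel{\rm w}{\preceq}\boldsymbol{\alpha}$ then yields the required inequality $\sum_{i}\phi(\alpha_{i}^{*})\leq\sum_{i}\phi(\alpha_{i})$.

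Because the target is a separable sum, Proposition~3.C.1 of \cite{met} collapses both properties into a single univariate task: I only need to show that $\phi:(0,\infty)\to(0,\infty)$ is strictly decreasing and convex for every fixed $t\in(0,1)$. To remove the awkward dependence on $t$, I would substitute $u:=-\alpha\log t>0$, under which a direct computation yields
\[
\phi(\alpha)=\frac{1}{-\log t}\,F(u),\qquad F(u):=\frac{u}{e^{u}-1}\quad(u>0).
\]
Since the change of variable is affine with positive slope, the problem reduces to proving that the classical Bernoulli-type function $F$ is decreasing and convex on $(0,\infty)$.

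Verifying this is the main technical step, and while monotonicity is short, convexity is the real obstacle. For monotonicity, $F'(u)=[e^{u}(1-u)-1]/(e^{u}-1)^{2}$; the numerator vanishes at $u=0$ and has derivative $-ue^{u}\leq 0$, so it is nonpositive on $(0,\infty)$ and $F'\leq 0$. For convexity, a direct differentiation produces
\[
F''(u)=\frac{e^{u}\bigl[(u-2)e^{u}+u+2\bigr]}{(e^{u}-1)^{3}},
\]
so I need to establish $g(u):=(u-2)e^{u}+u+2\geq 0$ on $[0,\infty)$. I would prove this by a double ``integrate-up'' argument: $g(0)=0$ and $g'(u)=(u-1)e^{u}+1$ with $g'(0)=0$, while $g''(u)=ue^{u}\geq 0$ on $[0,\infty)$, which gives first $g'\geq 0$ and hence $g\geq 0$. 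Note that Lemma~\ref{beel} does not appear to shortcut this step, since it controls the behavior of $h(\alpha,t)$ in $t$ rather than in $\alpha$. Once $F$ is known to be decreasing and convex, $\phi$ inherits both properties, the separable sum $\sum_{i}\phi(\alpha_{i})$ is decreasing and Schur-convex, and the chain of reductions above closes the proof.
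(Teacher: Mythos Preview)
Your proof is correct and follows the same overall architecture as the paper: express $r_{X_{1:n}}$ as a positive prefactor times $\sum_i\phi(\alpha_i)$ with $\phi(\alpha)=\alpha t^{\alpha}/(1-t^{\alpha})$, then invoke Theorem~3.A.8 and Proposition~3.C.1 of \cite{met} to reduce the problem to showing that $\phi$ is decreasing and convex in $\alpha$ for each fixed $t\in(0,1)$.

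The one genuine difference is in how that univariate fact is established. The paper simply cites Lemma~2.8 of \cite{to155} for the decreasing/convex property of $\varphi(x,p)=xp^{x}/(1-p^{x})$ in $x$. You instead give a self-contained argument: the affine substitution $u=-\alpha\log t$ turns $\phi$ into a positive multiple of the Bernoulli-type function $F(u)=u/(e^{u}-1)$, whose monotonicity and convexity you verify by elementary repeated differentiation. This buys you independence from the external reference and makes the proof fully explicit; the paper's version is shorter on the page but opaque unless one looks up Torrado's lemma. Your remark that Lemma~\ref{beel} is not useful here (it controls the $t$-variable, not the $\alpha$-variable) is also correct.
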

\begin{proof} 
Fix $ x>0 $. Then the hazard rate of $ X_{1:n} $ is
\[
r_{X_{1:n}}(x, \boldsymbol{\alpha}, \lambda)=\sum_{i=1}^{n}\alpha_{i}\lambda g(\lambda x) \dfrac{(G(\lambda x))^{\alpha_{i}-1}}{1-(G(\lambda x))^{\alpha_{i}}} =\lambda r(\lambda x) \sum_{i=1}^{n}\varphi(\alpha_{i},G(\lambda x)),
\]
where $ \varphi(x, p)= \dfrac{x p^{x}}{1-p^{x}} $, $ x\geq0, 0\leq p<1 $. From Theorem 3.A.8 of \cite{met}, it suffices to show that, 
for each $ x>0 $, $ r_{X_{1:n}}(x, \boldsymbol{\alpha}, \lambda) $  is Schur-convex and decreasing in $ \alpha_{i}^{\prime} $s.

By the Lemma 2.8 of \cite{to155}, $ \varphi(x, p) $ is decreasing and convex in $ x\geq0 $, then the hazard
rate function of $ X_{1:n} $ is decreasing and convex in each $ a_{i} $. 

So, from Proposition 3.C.1 of \cite{met}, the Schur-convexity of  $ r_{X_{1:n}}(x, \boldsymbol{\alpha}, \lambda) $
follows from convexity of $ \varphi(x, p) $.This completes the proof of the Required result.
\end{proof}

Recall that, a random variable $X$ is said to be distributed according the generalized exponential distribution, and will be denoted by $X\sim GE(\alpha, \lambda)$, if the distribution function is given by
\[
G(x)=\left(1-\exp\{-\lambda x\}\right)^{\alpha}, x>0, 
\]
where $\alpha >0$ is a shape parameter and $\lambda>0$ is a scale parameter. GE distribution is a member of ES family with underlying distribution $ G(x) = 1-\exp\{- x \}$. Therefore, we can get the following corollary that generalizes
the corresponding result in Theorem 15 of \cite{bee}. In particular the majorization assumption is relaxed to the weak supermajorization.
\begin{corollary}
{\rm For $ i=1,\dots,n $, let $ X_{i} $ and $ X^{*}_{i} $ be two sets of mutually independent random variables with $ X_i\sim {\rm GE}(\alpha_i , \lambda) $ and $ X^{*}_i\sim {\rm GE}(\alpha^{*}_i , \lambda) $. Then, for any $ \lambda>0 $, we
have
\[
 \boldsymbol{\alpha}\stackrel{\rm w}{\succeq}\boldsymbol{\alpha^{*}}  \Longrightarrow 
 X_{1:n}\leq_{\rm hr}X^{*}_{1:n}.
\]
}
\end{corollary}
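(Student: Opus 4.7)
The plan is to recognize this corollary as an immediate specialization of Theorem \ref{es11} to the exponential baseline. First I would verify that the $GE(\alpha, \lambda)$ distribution sits inside the exponentiated scale family: since the distribution function is $(1-\exp\{-\lambda x\})^\alpha = [G(\lambda x)]^\alpha$ with $G(x) = 1-\exp\{-x\}$, the statement $X_i \sim GE(\alpha_i, \lambda)$ is identical to $X_i \sim \text{ES}(\alpha_i, \lambda)$ with this choice of baseline distribution, and similarly for the starred sample.

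Once this identification is recorded, the conclusion follows by applying Theorem \ref{es11} verbatim: the hypothesis $\boldsymbol{\alpha}\stackrel{\rm w}{\succeq}\boldsymbol{\alpha^{*}}$ and the conclusion $X_{1:n}\leq_{\rm hr}X^{*}_{1:n}$ are exactly the hypothesis and conclusion of the theorem, and no property of the exponential baseline beyond its membership in $\text{ES}$ is required. Thus the proof is literally one line plus the identification remark; there is no obstacle to overcome because all the analytic work, namely the monotonicity and convexity of $\varphi(x,p)=xp^x/(1-p^x)$ in $x$ via Lemma 2.8 of \cite{to155} and the Schur-convexity deduction from Proposition 3.C.1 of \cite{met}, has already been carried out in the proof of Theorem \ref{es11}.

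The point worth emphasizing in the statement of the corollary is the comparison with Theorem 15 of \cite{bee}: that earlier result required the ordinary majorization $\boldsymbol{\alpha}\stackrel{\rm m}{\succeq}\boldsymbol{\alpha^{*}}$, whereas our framework allows the weaker weak supermajorization $\boldsymbol{\alpha}\stackrel{\rm w}{\succeq}\boldsymbol{\alpha^{*}}$ while preserving the hazard rate order conclusion. This strengthening is what makes the corollary worth stating separately, and it is exactly the improvement delivered by combining Theorem \ref{es11} with the general relationships between majorization notions used throughout the paper.
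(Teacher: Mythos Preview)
Your proposal is correct and matches the paper's own justification essentially verbatim: the paper simply observes that the GE distribution is a member of the ES family with baseline $G(x)=1-\exp\{-x\}$ and then invokes Theorem \ref{es11}, noting the improvement over Theorem 15 of \cite{bee} from majorization to weak supermajorization. There is nothing to add or correct.
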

 The following result considers the comparison on the lifetimes of series systems in terms
of the usual stochastic order when two sets of scale parameters weakly majorize each other.
\begin{theorem}
\rm Let $ X_{1}, \ldots , X_{n} $ ($ X^{*}_{1}, \ldots , X^{*}_{n} $) be independent random variables with $ X_{i} \thicksim {\rm ES}(\alpha, \lambda_{i} )$ ($ X^{*}_{i} \thicksim {\rm ES}(\alpha, \lambda^{*}_{i})$), $i = 1, . . . , n $. If $ q(\alpha, x) = \alpha \tilde{r}(x)\dfrac{G^{\alpha}(x)}{1-G^{\alpha}(x)}$ is decreasing (increasing) in $ x $, $\boldsymbol{\lambda} \stackrel{\rm w}{\succeq}(\succeq _{\rm w})
\boldsymbol{\lambda^{*}}$, then $X_{1:n}\geq_{\rm st}(\leq_{\rm st})X^{*}_{1:n}$.
\end{theorem}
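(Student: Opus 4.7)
The plan is to work directly with the survival function of $X_{1:n}$, establish its monotonicity and the appropriate Schur property in $\boldsymbol{\lambda}$, and close with the classical characterisation of weak sub/super-majorization (Theorem 3.A.8 of \cite{met}). By independence,
\[
\bar{F}_{X_{1:n}}(x;\boldsymbol{\lambda})=\prod_{i=1}^n\bigl[1-G(\lambda_i x)^{\alpha}\bigr],\qquad x>0,
\]
so for each fixed $x>0$ everything reduces to analysing this product as a function of the scale vector on $\mathbb R_{++}^n$.

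The central computational step is to differentiate in $\lambda_i$ and recognise the function $q$. Applying the chain rule and factoring out $\bar F_{X_{1:n}}/[1-G(\lambda_i x)^{\alpha}]$, one gets
\[
\frac{\partial \bar F_{X_{1:n}}}{\partial \lambda_i}(x;\boldsymbol{\lambda})=-\,x\,\bar F_{X_{1:n}}(x;\boldsymbol{\lambda})\,q(\alpha,\lambda_i x),
\]
because $\alpha g(y)G(y)^{\alpha-1}/[1-G(y)^{\alpha}]=\alpha\tilde r(y)G(y)^{\alpha}/[1-G(y)^{\alpha}]=q(\alpha,y)$ by the definitions of $\tilde r$ and $q$. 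Since $q(\alpha,\cdot)>0$, this immediately yields that $\bar F_{X_{1:n}}(x;\cdot)$ is decreasing in each coordinate $\lambda_i$. For the Schur property I would invoke the derivative criterion of Theorem 3.A.4 in \cite{met}, which reduces matters to the sign of
\[
(\lambda_i-\lambda_j)\Bigl(\tfrac{\partial \bar F_{X_{1:n}}}{\partial \lambda_i}-\tfrac{\partial \bar F_{X_{1:n}}}{\partial \lambda_j}\Bigr)=-\,x\,\bar F_{X_{1:n}}(x;\boldsymbol{\lambda})\,(\lambda_i-\lambda_j)\bigl(q(\alpha,\lambda_i x)-q(\alpha,\lambda_j x)\bigr),
\]
whose sign is governed entirely by the monotonicity of $q(\alpha,\cdot)$.

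If $q(\alpha,\cdot)$ is decreasing, the bracketed product is non-positive and the whole expression is non-negative, so $\bar F_{X_{1:n}}(x;\cdot)$ is Schur-convex and decreasing; part (b) of Theorem 3.A.8 in \cite{met} converts the hypothesis $\boldsymbol{\lambda}\stackrel{\rm w}{\succeq}\boldsymbol{\lambda^*}$ (i.e.\ $\boldsymbol{\lambda^*}\stackrel{\rm w}{\preceq}\boldsymbol{\lambda}$) into $\bar F_{X_{1:n}}(x;\boldsymbol{\lambda^*})\le \bar F_{X_{1:n}}(x;\boldsymbol{\lambda})$, which is $X_{1:n}\ge_{\rm st}X^*_{1:n}$. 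If instead $q(\alpha,\cdot)$ is increasing, the expression is non-positive, so $\bar F_{X_{1:n}}(x;\cdot)$ is Schur-concave and still decreasing; applying part (a) of the same theorem to $-\bar F_{X_{1:n}}(x;\cdot)$, which is then Schur-convex and increasing, turns $\boldsymbol{\lambda}\succeq_{\rm w}\boldsymbol{\lambda^*}$ into the reversed inequality $X_{1:n}\le_{\rm st}X^*_{1:n}$.

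The only genuine obstacle is the algebraic observation that the partial derivative collapses cleanly to $-x\,\bar F_{X_{1:n}}\,q(\alpha,\lambda_i x)$; once this identification is spotted, the rest is a routine application of the Marshall--Olkin machinery. As a bonus, by Lemma \ref{m2} the conclusion extends to weak $f$-super- (respectively sub-) majorization of $\boldsymbol{\lambda}$ for any increasing convex (resp.\ concave) $f$, which is consistent with the $f$-majorization theme of the paper.
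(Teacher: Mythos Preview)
Your argument is correct and follows essentially the same route as the paper: write the survival function as $\prod_i\bigl[1-G(\lambda_i x)^\alpha\bigr]$, compute $\partial_{\lambda_i}\bar F_{X_{1:n}}=-x\,\bar F_{X_{1:n}}\,q(\alpha,\lambda_i x)$, deduce coordinatewise monotonicity, and then use Theorem~3.A.4 and Theorem~3.A.8 of \cite{met} with the monotonicity of $q(\alpha,\cdot)$ to obtain the Schur property and the stochastic order. Your explicit identification $\alpha g(y)G(y)^{\alpha-1}/[1-G(y)^\alpha]=q(\alpha,y)$ and your handling of the Schur-concave case via $-\bar F_{X_{1:n}}$ are slightly more spelled out than in the paper, but substantively nothing differs.
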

\begin{proof}
For a fixed $x>0$, the survival function of $ X_{1:n} $ can be written as
\begin{equation}\label{gee1}
\overline{F}_{X_{1:n}}(x, \boldsymbol{\lambda})=  \prod_{i=1}^{n}\big(1-G(\lambda_{i}x))^{\alpha}\big).
\end{equation}

Now, using Theorem 3.A.8 of \cite{met}, it is enough to show that the function $ \overline{F}_{X_{1:n}}(x, \boldsymbol{\lambda}) $ given in \eqref{gee1} is Schur-convex (Schur-concave) and decreasing in $ \lambda_{i}^{\prime} $s. 

The partial derivatives of $ \overline{F}_{X_{1:n}}(x,\boldsymbol{\lambda}) $ with respect to $ \lambda_{i} $ is given by
\[
\frac{\partial \overline{F}_{X_{1:n}}(x, \boldsymbol{\lambda})}{\partial \lambda_{i}}=-x\overline{F}_{X_{1:n}}(x, \boldsymbol{\lambda})q(\alpha,  \lambda_{i}x),
\]
where $ q(\alpha, x) = \alpha \tilde{r}(x)\dfrac{G^{\alpha}(x)}{1-G^{\alpha}(x)}>0 $. Then we have that $ \overline{F}_{X_{1:n}}(x, \boldsymbol{\lambda}) $ is decreasing in each $ \lambda_{i} $.

From Theorem 3.A.4. in \cite{met} the Schur-convexity (Schur-concavity) follows if we prove that, for any $ i\neq j $,
\[
(\lambda_{i}-\lambda_{j})\bigg(\frac{\partial \overline{F}_{X_{1:n}}(x,\boldsymbol{\lambda})}{\partial \lambda_{i}}-\frac{\partial \overline{F}_{X_{1:n}}(x,\boldsymbol{\lambda})}{\partial \lambda_{j}} \bigg)\geq (\leq) 0,
\]
that is, for $ i\neq j $,
\begin{equation}
x\overline{F}_{X_{1:n}}(x, \boldsymbol{\lambda})(\lambda_{i}-\lambda_{j})\bigg(q(\alpha,  \lambda_{j}x)-q(\alpha,  \lambda_{i}x)\bigg)\geq(\leq) 0.
\end{equation}
By the assumption $ q(\alpha, x) $ is decreasing (increasing) in $ x $, which in turn implies that the
function $ q(\alpha,  \lambda_{i}x) $ is decreasing (increasing) in $ \lambda_{i} $ for $ i = 1, \ldots , n $. This completes the proof of the required result.
\end{proof}
  According to Lemma \ref{beel}, for the GE distribution $ q(\alpha, x) = h(\alpha, 1-\exp\{-x\}) $ is decreasing (increasing) in $ x $  for any $ 0<\alpha\leq1 $ ($ \alpha\geq1 $), so we have the following corollary.
\begin{corollary}\label{cf}
{\rm Let $ X_{1}, \ldots , X_{n} $ ($ X^{*}_{1}, \ldots , X^{*}_{n} $) be independent random variables with $ X_{i} \thicksim GE(\alpha, \lambda_{i} )$ ($ X^{*}_{i} \thicksim GE(\alpha, \lambda^{*}_{i})$), $i = 1, . . . , n $. If $ 0<\alpha\leq1 (\alpha \ge 1)$ and $(\lambda_1,\ldots,\lambda_n) \stackrel{\rm w}{\succeq}(\succeq_{\rm w})
(\lambda^{*}_{1}, \ldots , \lambda^{*}_{n})$, then $X_{1:n}\geq_{\rm st}(\leq_{\rm st})X^{*}_{1:n}$.}
\end{corollary}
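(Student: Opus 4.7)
The plan is to reduce Corollary~\ref{cf} to the immediately preceding theorem by identifying the GE family as a special case of the ES model and then verifying the monotonicity hypothesis on $q(\alpha,x)$ via Lemma~\ref{beel}. Since GE$(\alpha,\lambda)$ coincides with ES$(\alpha,\lambda)$ when the baseline distribution is taken to be $G(x)=1-e^{-x}$, the preceding theorem is directly applicable once the sign of $q(\alpha,\cdot)$ is settled.

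First I would substitute the baseline $G(x)=1-e^{-x}$, with density $g(x)=e^{-x}$ and reversed hazard rate $\tilde r(x)=e^{-x}/(1-e^{-x})$, into the definition
\[
q(\alpha,x)=\alpha\,\tilde r(x)\,\frac{G^{\alpha}(x)}{1-G^{\alpha}(x)}.
\]
A short computation gives
\[
q(\alpha,x)=\frac{\alpha\,e^{-x}(1-e^{-x})^{\alpha-1}}{1-(1-e^{-x})^{\alpha}}.
\]
Setting $t=1-e^{-x}\in(0,1)$, this is exactly
\[
q(\alpha,x)=\frac{\alpha(1-t)t^{\alpha-1}}{1-t^{\alpha}}=h(\alpha,t),
\]
where $h$ is the function studied in Lemma~\ref{beel}.

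Next I would combine the monotonicity of the auxiliary change of variable $x\mapsto t=1-e^{-x}$ (strictly increasing on $\mathbb R_{++}$) with Lemma~\ref{beel}. For $0<\alpha\le 1$ the lemma tells us that $h(\alpha,t)$ is decreasing in $t$, so by composition $q(\alpha,x)$ is decreasing in $x$; for $\alpha\ge 1$ it is increasing in $t$, hence increasing in $x$. This verifies the monotonicity hypothesis on $q(\alpha,\cdot)$ required by the preceding theorem, in the respective regimes of $\alpha$.

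Finally I would invoke the preceding theorem: in the case $0<\alpha\le 1$, the decreasing behaviour of $q(\alpha,\cdot)$ together with $(\lambda_1,\ldots,\lambda_n)\stackrel{\mathrm w}{\succeq}(\lambda_1^{*},\ldots,\lambda_n^{*})$ yields $X_{1:n}\ge_{\mathrm{st}} X_{1:n}^{*}$; in the case $\alpha\ge 1$, the increasing behaviour of $q(\alpha,\cdot)$ together with $(\lambda_1,\ldots,\lambda_n)\succeq_{\mathrm w}(\lambda_1^{*},\ldots,\lambda_n^{*})$ yields $X_{1:n}\le_{\mathrm{st}} X_{1:n}^{*}$. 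There is no genuine obstacle here beyond the bookkeeping: the only substantive step is the algebraic rewriting of $q$ as $h(\alpha,1-e^{-x})$, after which Lemma~\ref{beel} and the preceding theorem do all the work.
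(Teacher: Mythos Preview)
Your proposal is correct and follows essentially the same approach as the paper: the paper notes that for the GE baseline $G(x)=1-e^{-x}$ one has $q(\alpha,x)=h(\alpha,1-e^{-x})$, applies Lemma~\ref{beel} to get the required monotonicity in the two regimes of $\alpha$, and then invokes the preceding theorem. Your write-up simply makes the algebraic identification of $q$ with $h$ more explicit, but the argument is identical.
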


Note that $ (\lambda_1,\ldots,\lambda_n) \stackrel{\rm m}{\succeq}
(\lambda^{*}_{1}, \ldots , \lambda^{*}_{n}) $ implies both $ (\lambda_1,\ldots,\lambda_n) \stackrel{\rm w}{\succeq}(\lambda^{*}_{1}, \ldots , \lambda^{*}_{n}) $\\ and $ (\lambda_1,\ldots,\lambda_n) \succeq_{\rm w}
(\lambda^{*}_{1}, \ldots , \lambda^{*}_{n}) $, Corollary \ref{cf} substantially improves the corresponding ones provided  by Balakrishnan et al. \cite{bee}, in the sense that the majorization is relaxed to the weak majorization. Naturally, one may wonder whether the following two statements
are actually also true: (i) For $ \alpha \ge 1 $, $ (\lambda_1,\ldots,\lambda_n) \underset{\exp}{\succeq _{\rm w}}
(\lambda^{*}_{1}, \ldots , \lambda^{*}_{n}) $ gives rise to the usual stochastic order between $ X_{1:n} $ and $ X^{*}_{1:n} $; (ii) For $ 0<\alpha\leq1 $, $ (\lambda_1,\ldots,\lambda_n) \stackrel{\rm p}{\succeq}
(\lambda^{*}_{1}, \ldots , \lambda^{*}_{n}) $ gives rise to the usual stochastic order between $ X_{1:n} $ and $ X^{*}_{1:n} $. The following example
gives negative answers to these two
conjectures.
\begin{example}{\rm
Let $ (X_{1}, X_{2}) $ ($ (X^{*}_{1}, X^{*}_{2}) $) be a vector of independent heterogeneous GE random variables.
\begin{itemize}
\item[(i)] Set $ \alpha = 2 $, $ (\lambda_{1}, \lambda_{2}) = (4, 0.5) $ and $ (\lambda^{*}_{1}, \lambda^{*}_{2}) = (2, 3) $. Obviously $ (\lambda_{1}, \lambda_{2})\underset{\exp}{\succeq _{\rm w}}(\lambda^{*}_{1}, \lambda^{*}_{2})  $, however $ X_{1:2} $ and $ X^{*}_{1:2} $ are not ordered in the usual stochastic order as
can be seen in Fig. \ref{fig:expm}.
\item[(ii)] Set $ \alpha = 0.6 $. For $ (\lambda_1, \lambda_2) = (1, 5.5)\stackrel{\rm p}{\succeq} (2,3) =(\lambda^{*}_{1},  \lambda^{*}_{2}) $, $ X_{1:2}\leq_{\rm st}X^{*}_{1:2} $; however, for $ (\lambda_1, \lambda_2) = (1, 2.25)\stackrel{\rm p}{\succeq} (1.1,2.14) =(\lambda^{*}_{1},  \lambda^{*}_{2}) $, $ X_{1:2}\geq_{\rm st}X^{*}_{1:2} $.  So, $ (\lambda_1, \lambda_2) \stackrel{\rm p}{\succeq} (\lambda^{*}_{1},  \lambda^{*}_{2}) $  implies neither $ X_{1:2}\leq_{\rm st}X^{*}_{1:2} $ nor $ X_{1:2}\geq_{\rm st}X^{*}_{1:2} $ for $ 0<\alpha\leq1 $.
\end{itemize}
}
\end{example}

\begin{figure}[ht]
\centering
\includegraphics[scale=.5]{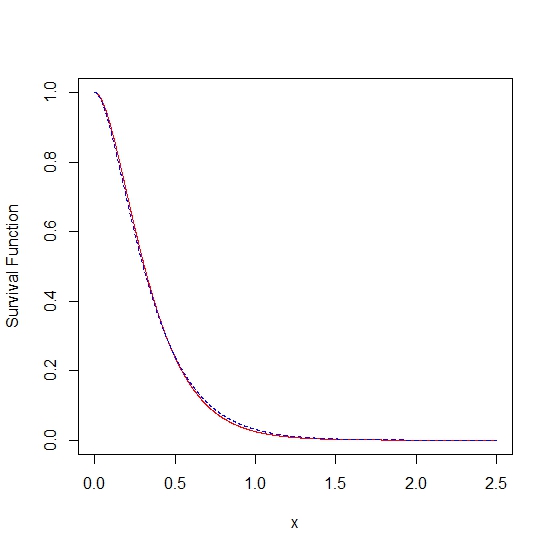}
\caption{\label{fig:expm} Plot of the survival
functions of $ X_{1:2} $ (dashed line) and $ X^{*}_{1:2} $ (continuous line) when $ \alpha = 2 $, $ (\lambda_{1}, \lambda_{2}) = (4, 0.5) $ and
$ (\lambda^{*}_{1}, \lambda^{*}_{2}) = (2, 3) $ for random
variables with GE distributions.}
\end{figure}

\subsection{Dependent samples with Archimedean structure}
Recently, some efforts are made to investigate stochastic comparisons on order statistics of random variables
with Archimedean copulas. See, for example, \cite{btr}, \cite{ll}, \cite{lfr} and
\cite{flld}. In this section we derive new result on the usual stochastic order between extreme order statistics of two heterogeneous random vectors with the dependent components having ES marginals and Archimedean copula structure.
Specifically, by $ \boldsymbol{X} \sim {\rm ES}(\boldsymbol{\alpha}, \boldsymbol{\lambda}, \phi ) $ we denote the sample having the Archimedean copula
with generator $ \phi $ and for $i=1,...,n$, $ X_i\sim ES(\alpha_i, \lambda_i ) $.

Here, we derive new result on the usual stochastic order for largest order statistics of samples ES and Archimedean sructure. The largest order statistic $ X_{n:n} $ of the sample $ \boldsymbol{X}\sim {\rm ES}(\boldsymbol{\alpha}, \lambda, \phi_{1} ) $ gets distribution function
\begin{equation}\label{four}
G_{X_{n:n} }(x)  = \phi \big( \sum_{i=1}^{n}\psi(G^{\alpha_{i}}(\lambda x))\big) = J(\boldsymbol{\alpha}, \lambda, x, \phi)
\end{equation}

 \begin{theorem}\label{de1}
{\rm For $ \boldsymbol{X} \sim {\rm ES}(\boldsymbol{\alpha}, \lambda, \phi_{1} ) $ and $ \boldsymbol{X^{*}} \sim {\rm ES}(\boldsymbol{\alpha^{*}}, \lambda, \phi_{2} ) $,
\begin{itemize}
\item[(i)] if $ \phi_{1} $ or $ \phi_{2} $ is log-convex, and $ \psi_2\circ\phi_1 $ is super-additive, then $ \boldsymbol{\alpha} \succeq _{\rm w}
\boldsymbol{\alpha^{*}} $ implies $X_{n:n}\geq_{\rm st}X^{*}_{n:n}$;
\item[(ii)] if $ \phi_{1} $ or $ \phi_{2} $ is log-concave, and $ \psi_1\circ\phi_2 $ is super-additive, then $ \boldsymbol{\alpha} \stackrel{\rm w}{\succeq}
\boldsymbol{\alpha^{*}} $ implies $X_{n:n}\leq_{\rm st}X^{*}_{n:n}$.
\end{itemize}}
\end{theorem}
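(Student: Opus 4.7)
The plan is to prove (i) in detail, with (ii) following by the mirror argument. I would decouple the comparison between $G_{X_{n:n}}(x)=J(\boldsymbol{\alpha},\lambda,x,\phi_1)$ and $G_{X^*_{n:n}}(x)=J(\boldsymbol{\alpha^*},\lambda,x,\phi_2)$ by passing through one of the intermediate quantities $J(\boldsymbol{\alpha},\lambda,x,\phi_2)$ or $J(\boldsymbol{\alpha^*},\lambda,x,\phi_1)$, one of which lives between the two target expressions.

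First I would use the super-additivity of $\psi_2\circ\phi_1$ to fix the shape parameters and upgrade the generator. Writing $u_i=G^{\alpha_i}(\lambda x)$, super-additivity applied to the arguments $\psi_1(u_i)$ yields $\sum_i\psi_2(u_i)=\sum_i(\psi_2\circ\phi_1)(\psi_1(u_i))\leq(\psi_2\circ\phi_1)(\sum_i\psi_1(u_i))$, and then applying the decreasing function $\phi_2$ converts this into the pointwise copula dominance $C_{\phi_1}(\mathbf u)\leq C_{\phi_2}(\mathbf u)$, whence $J(\boldsymbol{\alpha},\lambda,x,\phi_1)\leq J(\boldsymbol{\alpha},\lambda,x,\phi_2)$. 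The same calculation with $\boldsymbol{\alpha^*}$ in place of $\boldsymbol{\alpha}$ gives $J(\boldsymbol{\alpha^*},\lambda,x,\phi_1)\leq J(\boldsymbol{\alpha^*},\lambda,x,\phi_2)$.

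Next I would freeze the generator at whichever of $\phi_1,\phi_2$ is log-convex (call it $\phi$) and use the weak submajorization $\boldsymbol{\alpha^*}\preceq_{\rm w}\boldsymbol{\alpha}$. By Theorem 3.A.8 of \cite{met}, it suffices to prove that $J(\cdot,\lambda,x,\phi)$ is decreasing and Schur-concave in $\boldsymbol{\alpha}$, for then $-J$ is increasing and Schur-convex and the theorem gives $J(\boldsymbol{\alpha},\lambda,x,\phi)\leq J(\boldsymbol{\alpha^*},\lambda,x,\phi)$. Chaining with the first step then produces $G_{X_{n:n}}(x)\leq G_{X^*_{n:n}}(x)$, that is, $X_{n:n}\geq_{\rm st}X^*_{n:n}$.

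The main obstacle is the Schur-concavity claim, and this is where log-convexity of $\phi$ earns its keep. Monotonicity of $J$ in each $\alpha_i$ is immediate since $\alpha\mapsto G^{\alpha}(\lambda x)$, $\psi$, and $\phi$ are all decreasing. For the Schur property, since $\phi$ is decreasing, Schur-concavity of $J$ is equivalent to Schur-convexity of the symmetric separable sum $\sum_i\psi(G^{\alpha_i}(\lambda x))$, which is equivalent to univariate convexity of $\alpha\mapsto\psi(G^{\alpha}(s))$ for fixed $s=\lambda x$. After the affine substitution $y=\alpha\log G(s)$ this becomes convexity of $y\mapsto\psi(e^y)$, and the identity $[\psi(e^y)]''=e^y[t\psi'(t)]'|_{t=e^y}$ reduces the problem to monotonicity of $t\psi'(t)$ in $t$. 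Reparametrizing $t=\phi(u)$ and using $\psi'(t)=1/\phi'(u)$ gives $t\psi'(t)=\phi(u)/\phi'(u)=1/(\log\phi)'(u)$, and a short sign check shows this is increasing in $t$ precisely when $(\log\phi)''\geq 0$, i.e., when $\phi$ is log-convex. Part (ii) is handled identically: super-additivity of $\psi_1\circ\phi_2$ now yields the reverse copula dominance $C_{\phi_2}\leq C_{\phi_1}$; log-concavity of $\phi$ reverses the relevant convexity and makes $J(\cdot,\lambda,x,\phi)$ decreasing and Schur-convex; and the weak supermajorization $\boldsymbol{\alpha^*}\stackrel{\rm w}{\preceq}\boldsymbol{\alpha}$ combined with the ``decreasing plus Schur-convex'' clause of Theorem 3.A.8 of \cite{met} yields the required inequality in the reverse direction, producing $X_{n:n}\leq_{\rm st}X^*_{n:n}$.
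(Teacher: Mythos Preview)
Your proposal is correct and follows essentially the same two-step architecture as the paper's proof: first compare copulas via the super-additivity hypothesis, then move from $\boldsymbol{\alpha}$ to $\boldsymbol{\alpha^*}$ with the generator frozen using the Schur property. The paper cites Lemma A.1 of \cite{lfr} for the first step, whereas you derive the inequality $C_{\phi_1}\leq C_{\phi_2}$ directly from super-additivity; and for the Schur-concavity of $J$ the paper applies the Schur--Ostrowski criterion (Theorem 3.A.4 of \cite{met}) to the partial derivatives, while you exploit the separable structure via Proposition 3.C.1 of \cite{met}, reducing to univariate convexity of $\alpha\mapsto\psi(G^\alpha)$. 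Both routes pivot on exactly the same fact---log-convexity of $\phi$ forces $\phi/\phi'$ to be decreasing---so the difference is in packaging rather than substance; your separable-convexity argument is a bit more transparent, while the paper's direct derivative check avoids the affine change of variables.
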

\begin{proof}
According to Equation \eqref{four}, $ X_{n:n} $ and $ X^{*}_{n:n} $ have their respective distributin functions $ J(\boldsymbol{\alpha}, \lambda, x, \phi_1) $ and $ J(\boldsymbol{\alpha^{*}}, \lambda, x, \phi_2) $,
for $ x \geq 0 $.

\begin{itemize}
\item[(i)]We only prove the case that $ \phi_1 $ is log-convex, and the other case can be finished similarly. First we show that $ J(\boldsymbol{\alpha}, \lambda, x, \phi_1) $ is decreasing and Schur-concave function of $ \alpha_i, i=1,\ldots,n$. Since $ \phi_1 $ is decreasing, we have
\[
\dfrac{\partial J(\boldsymbol{\alpha}, \lambda, x, \phi_1)}{\partial \alpha_i} = \log(G(\lambda x)) (G(\lambda x))^{\alpha_i} \dfrac{\phi^{\prime}_1 \big( \sum_{i=1}^{n}\psi_{1}(G^{\alpha_i}(\lambda x))\big)}{\phi^{\prime}_1 \big( \psi_1(G^{\alpha_i}(\lambda x))\big)} \leq 0,
\]
\[
 \text{for all}\quad x>0,
\]	
That is, $ J(\boldsymbol{\alpha}, \lambda, x, \phi_1) $ is decreasing in $ \alpha_{i} $ for $ i = 1, \ldots , n $.
Furthermore, for $ i\neq j $, the decreasing $ \phi_1 $ implies
\[
(\alpha_i - \alpha_j) \big(\dfrac{\partial J(\boldsymbol{\alpha}, \lambda, x, \phi_1)}{\partial \alpha_i}-\dfrac{\partial J(\boldsymbol{\alpha}, \lambda, x, \phi_1)}{\partial \alpha_j}\big) = 
\]
\[
(\alpha_i - \alpha_j)\log(G(\lambda x)) \phi^{\prime}_1 \big( \sum_{i=1}^{n}\psi_{1}(G^{\alpha_i}(\lambda x))\big)
\]
\begin{equation*}
 \bigg(
 \dfrac{(G(\lambda x))^{\alpha_i}}{\phi^{\prime}_1 \big( \psi_1(G^{\alpha_i}(\lambda x))\big)}- \dfrac{(G(\lambda x))^{\alpha_j}}{\phi^{\prime}_1 \big( \psi_1(G^{\alpha_j}(\lambda x))\big)} \bigg)
\end{equation*}
\begin{equation*}
\mathop  = \limits^{{\mathop{\rm sgn}} } (\alpha_i - \alpha_j)   \bigg(
 \dfrac{(G(\lambda x))^{\alpha_i}}{\phi^{\prime}_1 \big( \psi_1(G^{\alpha_i}(\lambda x))\big)}- \dfrac{(G(\lambda x))^{\alpha_j}}{\phi^{\prime}_1 \big( \psi_1(G^{\alpha_j}(\lambda x))\big)} \bigg).
\end{equation*}
where $ \mathop = \limits^{{\mathop{\rm sgn}} } $ means that both sides have the sam sign. 
Note that the log-convexity of $ \phi_1 $ implies the decreasing property of $ \frac{\phi_1}{\phi^{\prime}_1} $. Since
$ \psi_1(G^{\alpha_i}(x)) $ is increasing in $ \alpha_{i} $, then $ \dfrac{G^{\alpha_i}(x)}{\phi^{\prime}_1 \big( \psi_1((G(x))^\alpha_i)\big)} = \dfrac{\phi_1(\psi_1((G(x))^{\alpha_i}))}{\phi^{\prime}_1 \big( \psi_1((G(x))^{\alpha_i})\big)} $ is decreasing in $ a_{i} $.
So, for $ i\neq j $,
\[
(\alpha_i - \alpha_j) \big(\dfrac{\partial J(\boldsymbol{\alpha}, \lambda, x, \phi_1)}{\partial \alpha_i}-\dfrac{\partial J(\boldsymbol{\alpha}, \lambda, x, \phi_1)}{\partial \alpha_j}\big)\leq 0.
\]
Then Schur-concavity of $ J(\boldsymbol{\alpha}, \lambda, x, \phi_1) $ follows from Theorem 3.A.4. in \cite{met}. According to Theorem 3.A.8 of \cite{met}, $ \boldsymbol{\alpha} \succeq _{\rm w}
\boldsymbol{\alpha^{*}} $
implies $ J(\boldsymbol{\alpha}, \lambda, x, \phi_1) \leq J(\boldsymbol{\alpha^{*}}, \lambda, x, \phi_1)  $.
On the other hand, since $ \psi_2\circ\phi_1 $ is super-additive
by Lemma A.1. of \cite{lfr}, we have $ J(\boldsymbol{\alpha^{*}}, \lambda, x, \phi_1) \leq J(\boldsymbol{\alpha^{*}}, \lambda, x, \phi_2)  $. So, it holds that
\[
J(\boldsymbol{\alpha}, \lambda, x, \phi_1) \leq J(\boldsymbol{\alpha^{*}}, \lambda, x, \phi_1) \leq J(\boldsymbol{\alpha^{*}}, \lambda, x, \phi_2) .
\]
That is, $ X_{n:n}\geq_{\rm st} X^{*}_{n:n} $.
\item[(ii)] We omit its proof due to the similarity to that of Part (i).
\end{itemize}
\end{proof}

Note that Theorem \ref{de1} for particular case $ \lambda = 1 $ in \cite{flld} has been proved.

From Theorem \ref{de1} (i) and the fact that weak log-majorization implies weak submajorization, we readily obtain the following corollary.
\begin{corollary}\label{corr}
{\rm For $ \boldsymbol{X} \sim {\rm ES}(\boldsymbol{\alpha}, \lambda, \phi_{1} ) $ and $ \boldsymbol{X^{*}} \sim {\rm ES}(\boldsymbol{\alpha^{*}}, \lambda, \phi_{2} )$,

if $ \phi_{1} $ or $ \phi_{2} $ is log-convex, and $ \psi_2\circ\phi_1 $ is super-additive, then $ \boldsymbol{\alpha} \underset{\log}{\succ _{\rm w}}
\boldsymbol{\alpha^{*}} $  implies $X_{n:n}\leq_{\rm st}X^{*}_{n:n}$.
}
\end{corollary}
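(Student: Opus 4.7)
The corollary is a direct consequence of Theorem \ref{de1}(i) together with the classical fact recalled just after Definition \ref{a1}(iv): on $\mathbb{R}_+^n$ weak log-majorization implies weak submajorization (5.A.2.b of \cite{met}). My plan is therefore to present the proof as a short chain of implications, without redoing any of the Schur-convexity/copula analysis that already sits inside Theorem \ref{de1}(i).

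First I would translate the hypothesis into the weak submajorization language. By Definition \ref{a1}(iv), $\boldsymbol{\alpha} \underset{\log}{\succeq_{\rm w}} \boldsymbol{\alpha^{*}}$ is the same as $\boldsymbol{\alpha^{*}} \underset{\log}{\preceq_{\rm w}} \boldsymbol{\alpha}$, i.e., $\prod_{i=1}^{j}\alpha^{*}_{[i]} \le \prod_{i=1}^{j}\alpha_{[i]}$ for every $j=1,\ldots,n$. Invoking 5.A.2.b of \cite{met}, this implies $\sum_{i=1}^{j}\alpha^{*}_{[i]} \le \sum_{i=1}^{j}\alpha_{[i]}$ for $j=1,\ldots,n$. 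Rewriting these inequalities in terms of the increasing-order statistics used in Definition \ref{a1}(i) yields $\boldsymbol{\alpha^{*}} \preceq_{\rm w} \boldsymbol{\alpha}$, which is precisely the weak submajorization $\boldsymbol{\alpha} \succeq_{\rm w} \boldsymbol{\alpha^{*}}$ required as input to Theorem \ref{de1}(i).

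Second, I would invoke Theorem \ref{de1}(i) directly: under the already assumed hypotheses that one of $\phi_{1}, \phi_{2}$ is log-convex and that $\psi_{2} \circ \phi_{1}$ is super-additive, together with the relation $\boldsymbol{\alpha} \succeq_{\rm w} \boldsymbol{\alpha^{*}}$ just established, that theorem immediately yields the asserted stochastic comparison between $X_{n:n}$ and $X^{*}_{n:n}$.

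Because the entire argument reduces to two applications of results already stated in the paper, there is no genuine obstacle. The only point to watch is the purely notational conversion between weak log-majorization (formulated with the decreasing arrangement $\alpha_{[i]}$ over the initial indices $i=1,\ldots,j$) and weak submajorization (formulated in Definition \ref{a1}(i) with the increasing arrangement $\alpha_{(i)}$ over the tail indices $i=j,\ldots,n$); once that indexing bookkeeping is handled correctly, the proof collapses to a one-line chain of implications.
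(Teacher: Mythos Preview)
Your proposal is correct and matches the paper's own argument exactly: the paper derives the corollary in a single sentence, invoking Theorem \ref{de1}(i) together with the fact (5.A.2.b of \cite{met}) that weak log-majorization implies weak submajorization. Your additional care with the index/arrangement bookkeeping is fine, and your phrasing ``the asserted stochastic comparison'' is prudent, since Theorem \ref{de1}(i) actually delivers $X_{n:n}\geq_{\rm st}X^{*}_{n:n}$, so the direction written in the corollary statement appears to be a typographical slip in the paper.
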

Letting $ \lambda = 1 $ in Corollary \ref{corr} leads to the following corollary for PRH samples.
\begin{corollary}\label{c1}
{\rm For $ \boldsymbol{X} \sim {\rm PRH}(\boldsymbol{\alpha}, \phi_{1} ) $ and $ \boldsymbol{X^{*}} \sim {\rm PRH}(\boldsymbol{\alpha^{*}}, \lambda, \phi_{2} )$,

if $ \phi_{1} $ or $ \phi_{2} $ is log-convex, and $ \psi_2\circ\phi_1 $ is super-additive, then $ \boldsymbol{\alpha} \underset{\log}{\succ _{\rm w}}
\boldsymbol{\alpha^{*}} $  implies $X_{n:n}\leq_{\rm st}X^{*}_{n:n}$.}
\end{corollary}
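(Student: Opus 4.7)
The plan is to obtain Corollary \ref{c1} as an immediate specialization of Corollary \ref{corr} by fixing the common scale parameter at $\lambda=1$. The observation that drives the whole argument is that the PRH (proportional reversed hazard) model is exactly the exponentiated scale model $\mathrm{ES}(\boldsymbol{\alpha},\lambda,\phi)$ with $\lambda=1$: if $X_i$ has baseline $G$ and PRH exponent $\alpha_i$, its marginal distribution is $G^{\alpha_i}(x)=[G(1\cdot x)]^{\alpha_i}$, matching the $\mathrm{ES}(\alpha_i,1)$ specification used throughout Section 3.3. Hence the vectors $\boldsymbol{X}\sim\mathrm{PRH}(\boldsymbol{\alpha},\phi_1)$ and $\boldsymbol{X^{*}}\sim\mathrm{PRH}(\boldsymbol{\alpha^{*}},\phi_2)$ are, by definition, the vectors $\mathrm{ES}(\boldsymbol{\alpha},1,\phi_1)$ and $\mathrm{ES}(\boldsymbol{\alpha^{*}},1,\phi_2)$, to which Corollary \ref{corr} directly applies.

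First I would restate the hypotheses verbatim and point out that the two required structural conditions, namely that one of $\phi_1,\phi_2$ is log-convex and that $\psi_2\circ\phi_1$ is super-additive, are identical to those of Corollary \ref{corr} and are unaffected by the choice of $\lambda$. Then I would invoke Corollary \ref{corr} at $\lambda=1$ with the weak log-majorization hypothesis $\boldsymbol{\alpha}\underset{\log}{\succ_{\rm w}}\boldsymbol{\alpha^{*}}$ to conclude $X_{n:n}\leq_{\rm st}X^{*}_{n:n}$, which is the desired statement.

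No substantive work is needed beyond this reduction. The genuine content of the result already sits in Theorem \ref{de1}, where the Schur-concavity of the mapping $\boldsymbol{\alpha}\mapsto J(\boldsymbol{\alpha},\lambda,x,\phi_1)$ was extracted from the log-convexity of $\phi_1$ (via the decreasing property of $\phi_1/\phi_1'$), and where the super-additivity of $\psi_2\circ\phi_1$ was used through Lemma A.1 of \cite{lfr} to compare the two Archimedean copulas at a common parameter vector. Corollary \ref{corr} then packages Theorem \ref{de1}(i) using the implication that weak log-majorization entails weak submajorization (Marshall--Olkin 5.A.2.b, as recalled in Definition \ref{a1}(iv)). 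Since Corollary \ref{c1} is obtained by instantiating one numerical parameter, the only thing I expect to be worth double-checking, and thus the nominal ``obstacle,'' is ensuring the notational match between the PRH definition and the $\lambda=1$ slice of the ES family, and in particular that the generators $\phi_1,\phi_2$ and their inverses $\psi_1,\psi_2$ appearing in the PRH statement are the same objects that appeared in the ES statement. Once this identification is spelled out, the corollary is immediate.
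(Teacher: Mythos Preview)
Your proposal is correct and matches the paper's approach exactly: the paper derives Corollary~\ref{c1} simply by setting $\lambda=1$ in Corollary~\ref{corr}, identifying the PRH model with the $\mathrm{ES}(\boldsymbol{\alpha},1,\phi)$ slice. Your additional remarks tracing the argument back through Theorem~\ref{de1} and the weak log-majorization $\Rightarrow$ weak submajorization implication are accurate but go beyond what the paper itself spells out.
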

Note that \cite{flld} in Theorem 5.2 proved the stochastic order between two largest order statistics when $ -\log \phi_{1} $ or $ -\log \phi_{2} $ is log-concave, but according to Corollary \ref{c1}, we do not need to check the log-concavity of $ -\log \phi_{1} $ or $ -\log \phi_{2} $ and it is only enough that $ \phi_{1} $ or $ \phi_{2} $ be log-convex.

\section{Conclusions}
We used a new majorization notion, called $f$-majorization. The new majorization notion includes, as special cases, the usual majorization, the reciprocal majorization and the $ p $-larger majorization notions. We provided a comprehensive account of the mathematical properties of the $f$-majorization order and gave applications of this order in the context of stochastic comparison of extreme order statistics.

\section*{Acknowledgement}
We would like to express our deep appreciation to Dr Javanshiri for his  helpful comments that
improved this paper.  

\end{document}